
\documentclass[a4paper,leqno]{amsart}


\usepackage{amssymb,amsthm,amsmath,amsrefs}

\usepackage{ifthen}


\newcounter{mylisti} \newcounter{mylistii}
\newcounter{nest}
\newcommand{\defaultlabel}{}

\newenvironment{mylist}[1]{%
  \addtocounter{nest}{1}
  \ifthenelse{\value{nest}=1}{%
    \renewcommand{\defaultlabel}{(\roman{mylisti})\hfill}}{%
    \renewcommand{\defaultlabel}{(\alph{mylistii})\hfill}}
  \begin{list}{\defaultlabel}{%
      \ifthenelse{\value{nest}=1}{\usecounter{mylisti}}{%
        \usecounter{mylistii}}
      
      \addtolength{\itemsep}{0.5ex}
      \settowidth{\labelwidth}{#1}
      \setlength{\leftmargin}{\labelwidth}
      \addtolength{\leftmargin}{\labelsep}}}{\addtocounter{nest}{-1}
\end{list}}



\usepackage{accents}
\newcommand{\thickbar}[1]{\accentset{\rule{.4em}{.6pt}}{#1}}
\newcommand{\widethickbar}[1]{\accentset{\rule{.6em}{.6pt}}{#1}}

\newcommand{\Widethickbar}[1]{\accentset{\rule{.8em}{.6pt}}{#1}}

\newcommand{\lb}{\ensuremath{\thickbar{l}}}
\newcommand{\mb}{\ensuremath{\Widethickbar{m}}}
\newcommand{\nb}{\ensuremath{\thickbar{n}}}
\newcommand{\pb}{\ensuremath{\thickbar{p}}}


\newcommand{\bn}{\ensuremath{\mathbb N}}

\newcommand{\br}{\ensuremath{\mathbb R}}


\newcommand{\cE}{\ensuremath{\mathcal E}}
\newcommand{\cF}{\ensuremath{\mathcal F}}

\newcommand{\cP}{\ensuremath{\mathcal P}}

\newcommand{\cU}{\ensuremath{\mathcal U}}
\newcommand{\cV}{\ensuremath{\mathcal V}}


\usepackage{mathrsfs}


\newcommand{\cUb}{\ensuremath{\widethickbar{\mathcal U}}}










\newcommand{\vP}{\ensuremath{\boldsymbol{\mathrm{P}}}}

\newcommand{\vS}{\ensuremath{\boldsymbol{\mathrm{S}}}}


\newcommand{\abs}[1]{\lvert #1\rvert}

\newcommand{\Bigabs}[1]{\Big\lvert #1\Big\rvert}
\newcommand{\biggabs}[1]{\bigg\lvert #1\bigg\rvert}










\DeclareMathOperator*{\wlim}{w\mathrm{-}\lim}









\newcommand{\sizeleq}[2]{{[#1]}^{\leq #2}}
\newcommand{\sizeeq}[2]{{[#1]}^{#2}}

\newcommand{\infin}[1]{{[#1]}^{\omega}}












\newcommand{\itws}{\text{\textcircled z}}




\DeclareMathOperator{\lip}{\ensuremath{\mathrm{Lip}}}


\newcommand{\norm}[1]{\lVert #1\rVert}
\newcommand{\bignorm}[1]{\big\lVert #1\big\rVert}
\newcommand{\Bignorm}[1]{\Big\lVert #1\Big\rVert}






\newcommand{\restrict}{\ensuremath{\!\!\restriction}}





\newcommand{\tsum}{\ts\sum}








\newcommand{\co}{\mathrm{c}_0}


\newcommand{\V}{\forall \,}

\newcommand{\vare}{\varepsilon}

\renewcommand{\geq}{\geqslant}
\renewcommand{\leq}{\leqslant}


\newcommand{\ds}{\displaystyle}

\newcommand{\ts}{\textstyle}

\newcommand{\ie}{\textit{i.e.,}\ }

\newcommand{\etc}{\textit{etc.}}


\newtheorem{thm}{Theorem}
\newtheorem{problem}{Problem}

\newtheorem{lem}[thm]{Lemma}
\newtheorem{prop}[thm]{Proposition}
\newtheorem{cor}[thm]{Corollary}

\theoremstyle{definition}

\newtheorem*{defn}{Definition}

\theoremstyle{remark}

\newtheorem*{rem}{Remark}
\newtheorem*{rems}{Remarks}


\makeatletter
\def\author@andify{%
  \nxandlist {\unskip ,\penalty-1 \space\ignorespaces}%
    {\unskip {} \@@and~}%
    {\unskip \penalty-2 \space \@@and~}%
}
\makeatother

\makeatletter
\renewcommand{\andify}{%
  \nxandlist{\unskip, }{\unskip{} \@@and~}{\unskip{} \@@and~}}
\makeatother


\makeatletter
\renewcommand{\PrintNames@a}[4]{%
    \PrintSeries{\name}
        {#1}
        {}{ and \set@othername}
        {,}{ \set@othername}
        {}{ and \set@othername}
        {#2}{#4}{#3}%
}
\makeatother


\author{F.~Baudier}
\address{Department of Mathematics, Texas A\&M University, College
  Station, TX 77843, USA}
\email{florent@tamu.edu}

\author{Th.~Schlumprecht}
\address{Department of Mathematics, Texas A\&M University, College
  Station, TX 77843, USA and Faculty of Electrical Engineering, Czech
  Technical University in Prague, Zikova 4, 166 27, Prague, Czech
  Republic}
\email{schlump@tamu.edu}

\author{A.~Zs\'ak}
\address{Peterhouse, Cambridge CB2 1RD and Department of Pure
  Mathematics and Mathematical Statistics, Centre for Mathematical
  Sciences, University of Cambridge, Wilberforce Road, Cambridge CB3
  0WB, United Kingdom}
\email{a.zsak@dpmms.cam.ac.uk}

\thanks{F.~Baudier was supported by the National Science Foundation
  under Grant Number DMS-2055604. Th.~Schlumprecht was supported by
  the National Science Foundation under Grant Number
  DMS-2054443. A.~Zs\'ak was supported by the Workshop in Analysis and
  Probability at Texas A\&M University in 2023.}
	
\keywords{stable spaces, property Q, coarse embeddings}

\subjclass[2020]{46B85, 46B20, 51F30, 05C63}

\title{On stability of metric spaces and Kalton's property $Q$}

\begin{document}

\begin{abstract}
  The first named author introduced the notion of upper stability for
  metric spaces in~\cite{baudier:22} as a relaxation of stability. The
  motivation was a search for a new invariant to distinguish the class
  of reflexive Banach spaces from stable metric spaces in the coarse
  and uniform category. In this paper we show that property~$Q$ does
  in fact imply upper stability. We also provide a direct proof of the
  fact that reflexive spaces are upper stable by relating the latter
  notion to the asymptotic structure of Banach spaces.
\end{abstract}

\maketitle

\section{Introduction}

In~\cite{kalton:07}, Kalton proved two landmark results about the
coarse and uniform geometry of infinite-dimensional Banach spaces. The
first result was a nonlinear embedding result that states that every
stable metric space admits a coarse and uniform embedding into a
reflexive Banach space.

While uniform embeddings have long been considered in nonlinear Banach
space geometry (see~\cite{ben-lin:00} for an extensive
account), the notion of coarse embedding was introduced more recently
by Gromov (under a different terminology) in~\cite{gromov:93} as a
notion of metric faithfulness that turned out to be of paramount
importance in geometric group theory and noncommutative geometry. We
refer to the monograph~\cite{now-yu:12} for a further discussion of
coarse embeddings. Kalton's embedding result was quantitatively
refined in~\cite{bau-lan:15}, where it was observed that the
embeddings possess even better metric faithfulness properties: they
can be arbitrarily close to isometric embeddings.

Krivine and Maurey introduced the notion of stability for separable
Banach spaces~\cite{kri-mau:81} in order to extend a result of
Aldous~\cite{aldous:81} by showing that every stable Banach space
admits an isomorphic copy of $\ell_p$ for some
$p\in[1,\infty)$. Garling~\cite{garling:82} then studied this
isometric Banach space notion in the more general context of arbitrary
metric spaces.

\begin{defn}
  A metric space $(M,d_M)$ is \emph{stable }if for any pair $\cU,\cV$
  of free ultrafilters on $\bn$ and for any two bounded sequences
  $(x_n),(y_n)$ in $M$ we have
  \[
  \lim_{m,\cU}\lim_{n,\cV}d_M(x_m,y_n)=\lim_{n,\cV}\lim_{m,\cU}d_M(x_m,y_n)\ .
  \]
\end{defn}

A close connection between stability and reflexivity in the category
of metrizable topological spaces and continuous maps was known
since the original work of Krivine and Maurey, and the new insight of
Kalton was to establish another connection in the category of metric
spaces and Lipschitz maps. In the latter connection, certain spaces of
Lipschitz functions play a decisive role. Both connections rely on an
important factorization result for weakly compact operators via
reflexive spaces~\cite{dav-fig-joh-pel:74}. Kalton raised a few
natural problems.

\begin{problem}[\cite{kalton:07}*{Problem~6.1}]
  \label{problem:kalton1}
  Does every (separable) reflexive Banach space coarsely (or
  uniformly) embed into a stable metric space? Does every reflexive
  Banach space admit an embedding, which is simultaneously coarse and
  uniform, into a stable metric space?
\end{problem}

Kalton explicitly suspected that Problem~\ref{problem:kalton1} might
have a negative solution, but that Problem~\ref{problem:kalton2} below
could have a positive solution.

\begin{problem}[\cite{kalton:07}*{Problem~6.2}]
  \label{problem:kalton2}
  If $X$ is a separable reflexive Banach space, does $B_X$ embed
  uniformly into a stable metric space?
\end{problem}

We will now mainly focus on the coarse case of
Problem~\ref{problem:kalton1} (as the discussion of the uniform case
is similar).  Recall that a metric space $(M,d_M)$ \emph{coarsely
embeds }into a metric space $(N,d_N)$ if there exist a map
$f\colon M\to N$ and non-decreasing functions
$\omega,\rho\colon[0,\infty)\to[0,\infty)$ with
$\lim_{t\to\infty}\rho(t)=\infty$ such that for all $x,y\in M$ we have
\begin{equation}
  \label{eq:coarse-embedding}
  \rho\big(d_M(x,y)\big)\leq
  d_N\big(f(x),f(y)\big)\leq\omega\big(d_M(x,y)\big)\ .
\end{equation}

A time-tested method to provide a negative answer to
Problem~\ref{problem:kalton1} is to find a coarse invariant, \ie a
property that is preserved under coarse embeddings, that is possessed
by every stable metric space but fails for at least one separable
reflexive Banach space. In~\cite{kalton:07}, Kalton introduced several
variants of a concentration property for Lipschitz maps defined on
certain infinite graphs, namely the \emph{interlacing graphs}. Given
an infinite subset $L$ of $\bn$ and an integer $k\in \bn$, we consider
the set $\sizeeq{L}{k}$ of all subsets of $L$ of size $k$ as the
vertex set of a graph equipped with the interlacing adjacency
relation, where two vertices $\mb=\{m_1,\dots,m_k\}$ and
$\nb=\{n_1,\dots,n_k\}$ in $\sizeeq{L}{k}$ with $m_1<\dots<m_k$ and
$n_1<\dots<n_k$ are adjacent if and only if they \emph{interlace}, \ie
$\mb\neq\nb$ and either
$m_1\leq n_1 \leq m_2\leq n_2\leq\dots\leq m_k\leq n_k$ or
$n_1\leq m_1\leq n_2\leq m_2\leq\dots\leq n_k\leq m_k$. The graph
metric induced by the interlacing adjacency relation will be denoted
by $d_I$.
\begin{defn}
  A metric space $(M,d_M)$ is said to have Kalton's \emph{property~$Q$
  }if there is a constant $C\geq 0$ such that for all $k\in\bn$ and
  for all bounded functions $f\colon(\sizeeq{\bn}{k},d_I)\to(M,d_M)$,
  there exists $L\in\infin{\bn}$ such that
  \[
  \sup_{\mb,\nb\in\sizeeq{L}{k}} d_M\big(f(\mb),f(\nb)\big)\leq
  C\lip(f)\ .
  \]
  The infimum of all constants $C$ satisfying the above will be
  denoted $Q_M$ and called the \emph{property~$Q$ constant of $M$}.
\end{defn}
Here $\infin{\bn}$ denotes the set of all infinite subsets of $\bn$
and $\lip(f)$ is the Lipschitz constant of $f$. Kalton's original
definition of property~$Q$ for Banach spaces was given in terms of a
modulus~\cite{kalton:07}*{pages~403--404}, but eventually took the
form above in~\cite{kalton:13}*{page~1057} (see
also~\cite{kalton:12}*{page~1283}).

The study of concentration inequalities for Lipschitz maps defined on
infinite graphs with countable degree, which began with property~$Q$
in~\cite{kalton:07} for interlacing graphs and continued shortly
thereafter in~\cite{kal-ran:08} for Hamming graphs, was a genuine
breakthrough towards unlocking many mysteries regarding the nonlinear
geometry of Banach spaces in terms of their infinite-dimensional
and asymptotic properties
(see~\citelist{\cite{bau-kal-lan:10}\cite{kalton:11}\cite{bcdkrsz:17}\cite{lan-raj:18}\cite{bau-lan-sch:18}\cite{netillard:18}\cite{bra-lan-pet-pro:23}\cite{lan-pet-pro:20}\cite{bau-lan-mot-sch:21a}\cite{bau-lan-mot-sch:21b}\cite{bau-gar:21}\cite{fovelle:23}}).

Kalton observed~\cite{kalton:07}*{Proposition~3.1} that every stable
metric space has property~$Q$, and that if a Banach space $X$ admits a
coarse embedding into a metric space with property~$Q$, then $X$ has
property~$Q$. The interlacing graph metric can be seen as being
induced by the summing basis of $\co$, and thus every Banach space
containing an isomorphic copy of $\co$ contains bi-Lipschitz copies of
the sequence $\big(\sizeeq{\bn}{k},d_I\big)_{k=1}^\infty$ of
interlacing graphs with uniformly bounded distortion. It follows that
$\co$ does not embed coarsely into any metric space with
property~$Q$. The second remarkable result of Kalton
from~\cite{kalton:07} is that reflexive Banach spaces also have
property~$Q$. An important consequence is that $\co$ does not coarsely
embed into any reflexive Banach space, thereby solving a longstanding
open question in the coarse geometry of Banach spaces. Kalton also
showed that property~$Q$ is a uniform as well as coarse invariant for
Banach spaces: if a Banach space $X$, or its closed unit ball $B_X$,
admits a uniform embedding into a metric space with property~$Q$, then
$X$ has property~$Q$. It follows that $\co$, or indeed the closed unit
ball of $\co$, does not uniformly embed into any reflexive Banach space,
solving yet another old open problem in the nonlinear theory of Banach
spaces.

The fact that reflexive spaces have property~$Q$ rules out
property~$Q$ as a potential coarse invariant to resolve negatively
Problem~\ref{problem:kalton1}. In an attempt to attack Problem
\ref{problem:kalton1}, the first author introduced
in~\cite{baudier:22} another invariant called upper stability. This
bi-Lipschitz invariant is a natural relaxation of the notion of
stability in light of a characterization of stability due to
Raynaud~\cite{raynaud:83} (see Theorem~\ref{thm:raynaud}
below). Upper stability was shown to imply property~$Q$ and to be
preserved under coarse embeddings (for Banach space domains and metric
space targets akin to property~$Q$).

\begin{defn}
  \label{def:upper-stable}
  Let $(M,d_M)$ be a metric space and $C$ be a non-negative real
  number. We say that $M$ is \emph{$C$-upper stable }if for all
  integers $1\leq l<k$, bounded functions
  $f\colon\sizeeq{\bn}{l}\to M$ and $g\colon\sizeeq{\bn}{k-l}\to M$,
  infinite sets $L\subset\bn$ and permutations $\pi\in S_k$ preserving
  the order on $\{1,\dots,l\}$ and on $\{l+1,\dots,k\}$,
    \begin{multline*}
      \inf_{\nb\in\sizeeq{L}{k}}
      d_M\big(f(n_1,\dots,n_l),g(n_{l+1},\dots,n_k)\big) \leq\\
      C\cdot\sup_{\nb\in\sizeeq{L}{k}}
      d_M\big(f(n_{\pi(1)},\dots,n_{\pi(l)}),g(n_{\pi(l+1)},\dots,n_{\pi(k)})\big)\ .
    \end{multline*}
    We say that $M$ is \emph{upper stable }if it is $C$-upper stable
    for some $C$.
\end{defn}

There are good reasons to think that an approach to solving
Problem~\ref{problem:kalton1} via upper stability would work. Indeed,
if $X$ is a Banach space containing no copies of $\ell_p$ for
$1\leq p<\infty$, then $X$ cannot isomorphically embed into a stable
Banach space because of the result of Krivine and Maurey mentioned
above. If in addition $X$ is separable, then $X$ is not even
bi-Lipschitzly embeddable into a reflexive stable Banach space due to
a result of Heinrich and
Mankiewicz~\cite{hei-man:82}*{Theorem~3.5}. Thus, any such space $X$
(Tsirelson's space, Schlumprecht's space, Gowers-Maurey space, \etc)
could be a candidate for a reflexive space failing upper stability. In
this paper, we dash this hope in two different ways, and at the same
time, we further our understanding of the connection between
stability, reflexivity and property~$Q$.

In Section~\ref{sec:upper-stability-and-asymptotic-structure}, we
prove that every reflexive Banach space is upper stable. Since
upper stability implies property~$Q$ (see~\cite{baudier:22}), we thus
recover the fact, originally proved by Kalton in~\cite{kalton:07},
that reflexive spaces have property~$Q$. Kalton's original proof
relies on some clever ``calculus'' for certain limiting operators in
reflexive spaces. In order to prove the a~priori stronger property of
upper stability, we take a more conceptual approach as we relate
upper stability with the asymptotic structure of Banach spaces in the
sense of Maurey, Milman and Tomczak-Jaegermann~\cite{mau-mil-tom:95}.

First, however, we prove in
Section~\ref{sec:upper-stability-and-property-Q} the more general
result that property~$Q$ and upper stability are equivalent notions
(up to the value of the constants involved) in the category of metric
spaces.

\section{Upper stability and property~$Q$}
\label{sec:upper-stability-and-property-Q}

We begin by stating the following characterization of stability, which
combines work of Krivine and Maurey~\cite{kri-mau:81} and
Raynaud~\cite{raynaud:83} (see also~\cite{baudier:22}).

\begin{thm}
  \label{thm:raynaud}
  The following are equivalent for a metric space $(M,d_M)$.
  \begin{mylist}{(iii)}
  \item
    $M$ is stable.
  \item
    For all $1\leq l<k$, bounded functions $f\colon\sizeeq{\bn}{l}\to M$
    and $g\colon\sizeeq{\bn}{k-l}\to M$, free ultrafilters
    $\cU_1,\dots,\cU_k$ and permutations $\pi\in S_k$ preserving the
    order on $\{1,\dots,l\}$ and on $\{l+1,\dots,k\}$,
    \begin{multline*}
      \lim_{n_1,\cU_1}\dots\lim_{n_k,\cU_k}
      d_M\big(f(n_1,\dots,n_l),g(n_{l+1},\dots,n_k)\big) =\\
      \lim_{n_{\pi^{-1}(1)},\cU_{\pi^{-1}(1)}}\dots\lim_{n_{\pi^{-1}(k)},\cU_{\pi^{-1}(k)}}
      d_M\big(f(n_1,\dots,n_l),g(n_{l+1},\dots,n_k)\big)\ .      
    \end{multline*}
  \item
    For all $1\leq l<k$, bounded functions $f\colon\sizeeq{\bn}{l}\to M$
    and $g\colon\sizeeq{\bn}{k-l}\to M$, free ultrafilters $\cU$ and
    permutations $\pi\in S_k$ preserving the order on $\{1,\dots,l\}$
    and on $\{l+1,\dots,k\}$,
    \begin{multline*}  
      \lim_{n_1,\cU}\dots\lim_{n_k,\cU}
      d_M\big(f(n_1,\dots,n_l),g(n_{l+1},\dots,n_k)\big) =\\
      \lim_{n_1,\cU}\dots\lim_{n_k,\cU}
      d_M\big(f(n_{\pi(1)},\dots,n_{\pi(l)}),g(n_{\pi(l+1)},\dots,n_{\pi(k)})\big)\ .
    \end{multline*}
  \item
    For all $1\leq l<k$, bounded functions $f\colon\sizeeq{\bn}{l}\to M$
    and $g\colon\sizeeq{\bn}{k-l}\to M$, infinite sets $L\subset\bn$
    and permutations $\pi\in S_k$ preserving the order on
    $\{1,\dots,l\}$ and on $\{l+1,\dots,k\}$,
    \begin{multline*}
      \inf_{\nb\in\sizeeq{L}{k}}
      d_M\big(f(n_1,\dots,n_l),g(n_{l+1},\dots,n_k)\big) \leq\\
      \sup_{\nb\in\sizeeq{L}{k}}
      d_M\big(f(n_{\pi(1)},\dots,n_{\pi(l)}),g(n_{\pi(l+1)},\dots,n_{\pi(k)})\big)\ .
    \end{multline*}
  \end{mylist}
\end{thm}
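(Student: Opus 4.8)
The statement is a cycle of equivalences $(i)\Rightarrow(ii)\Rightarrow(iii)\Rightarrow(iv)\Rightarrow(i)$, and I would organize the proof exactly this way, since each step is a different flavor of argument. The implication $(i)\Rightarrow(ii)$ is the original Krivine--Maurey content: given stability, one proves by induction on $k$ that iterated ultralimits of $d_M(f(n_1,\dots,n_l),g(n_{l+1},\dots,n_k))$ are invariant under any permutation $\pi$ preserving the order on the two blocks $\{1,\dots,l\}$ and $\{l+1,\dots,k\}$. Since any such $\pi$ is a product of adjacent transpositions that swap one index from the first block past one from the second block (a shuffle), it suffices to handle a single such swap, and that is precisely the two-sequence commutation in the definition of stability, applied after freezing the other $k-2$ variables along their ultrafilters. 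The base case $k=2$, $l=1$ is the definition verbatim; the inductive step peels off the outermost limit and applies the induction hypothesis to the remaining $k-1$ indices, using that a limit of equal quantities along an ultrafilter preserves the equality.

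The implication $(ii)\Rightarrow(iii)$ is the easy specialization $\cU_1=\dots=\cU_k=\cU$, once one checks that in $(iii)$ the right-hand side can be rewritten in the form appearing on the right-hand side of $(ii)$: reindexing $n_j \mapsto n_{\pi(j)}$ shows that $\lim_{n_1,\cU}\dots\lim_{n_k,\cU} d_M(f(n_{\pi(1)},\dots),g(\dots,n_{\pi(k)}))$ equals $\lim_{n_{\pi^{-1}(1)},\cU}\dots\lim_{n_{\pi^{-1}(k)},\cU} d_M(f(n_1,\dots),g(\dots,n_k))$, which is exactly the right-hand side of $(ii)$ with all ultrafilters equal. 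So $(iii)$ is literally $(ii)$ restricted to the diagonal. The implication $(iii)\Rightarrow(iv)$ is a standard ``ultrafilters compute inf and sup'' argument: fix $f,g,L,\pi$; enumerate $L=\{\ell_1<\ell_2<\dots\}$ and restrict attention to strictly increasing tuples drawn from $L$. One extracts, using any free ultrafilter $\cU$, values $n_1<n_2<\dots<n_k$ successively so that each $\lim_{n_j,\cU}$ is realized up to $\vare$; along such a rapidly increasing tuple the left-hand iterated limit in $(iii)$ is within $\vare$ of the actual value $d_M(f(n_1,\dots,n_l),g(n_{l+1},\dots,n_k))$, which is $\geq \inf_{\nb\in\sizeeq{L}{k}}(\cdots)$, and similarly the right-hand limit is $\leq \sup_{\nb\in\sizeeq{L}{k}}(\cdots)+\vare$; since the two iterated limits in $(iii)$ are equal, letting $\vare\to 0$ gives $(iv)$. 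Here one must be a little careful that the chosen tuple is strictly increasing so that it is a legitimate element of $\sizeeq{L}{k}$ (or $\sizeeq{\bn}{k}$), which is automatic from the successive extraction.

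The implication $(iv)\Rightarrow(i)$ is where I expect the only real subtlety, and it is essentially Raynaud's observation. Suppose $(i)$ fails: there are bounded sequences $(x_m),(y_n)$ in $M$ and free ultrafilters $\cU,\cV$ with $\lim_{m,\cU}\lim_{n,\cV} d_M(x_m,y_n)\neq\lim_{n,\cV}\lim_{m,\cU} d_M(x_m,y_n)$; say the two values are $a<b$ (after possibly swapping). Pick $\vare>0$ with $a+\vare<b-\vare$. Using the ultrafilters one can pass to subsequences (indexed by an infinite $L\subset\bn$) so that along increasing tuples the ``$m$ first, then $n$'' arrangement produces distances $\leq a+\vare$ while the ``$n$ first, then $m$'' arrangement produces distances $\geq b-\vare$. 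Concretely, take $l=1$, $k=2$, set $f\colon\sizeeq{\bn}{1}\to M$ to sample $(x_m)$ and $g\colon\sizeeq{\bn}{1}\to M$ to sample $(y_n)$ along $L$, and let $\pi$ be the transposition $(1\,2)$ (which vacuously preserves the order on the singleton blocks $\{1\}$ and $\{2\}$). Then, after the subsequence extraction, $\inf_{\nb\in\sizeeq{L}{2}} d_M(f(n_1),g(n_2))$ is still $\geq b-\vare$ while $\sup_{\nb\in\sizeeq{L}{2}} d_M(f(n_{\pi(1)}),g(n_{\pi(2)})) = \sup_{\nb} d_M(f(n_2),g(n_1)) \leq a+\vare$ — wait, one must orient the inequality correctly: arrange things (swapping $(x_m)\leftrightarrow(y_n)$ if needed) so that the configuration violating $(iv)$ is the one with the large inf and the small sup. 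This is just bookkeeping with which of the two iterated limits is larger. The point is that for \emph{every} infinite $L'\subset L$ the same bounds persist — because the defining property of the extracted subsequences is that the relevant distances are uniformly $\leq a+\vare$ or $\geq b-\vare$ along all sufficiently spread-out increasing tuples, and further passing to a subsequence only shrinks the index set — so no $L'$ can repair the inequality, contradicting $(iv)$. The hard part is thus purely the careful extraction at the start so that the two ultralimit values become genuine uniform gaps over a tail, valid hereditarily in $L$; everything else is reindexing.
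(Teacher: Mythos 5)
The paper gives no proof of Theorem~\ref{thm:raynaud} (it is cited from Krivine--Maurey and Raynaud), so your plan must stand on its own. Three of the four implications are fine: $(ii)\Rightarrow(iii)$ is indeed the specialisation $\cU_1=\dots=\cU_k$ after reindexing $n_j\mapsto n_{\pi(j)}$ (using that $f,g$ take sets as arguments); $(iii)\Rightarrow(iv)$ is the standard extraction of a rapidly increasing tuple from $L$ along a free ultrafilter containing $L$, done simultaneously for both sides of (iii); and $(iv)\Rightarrow(i)$, with the alternating extraction from $\cU,\cV$ and the swap of the roles of $f$ and $g$ (allowed since $d_M$ is symmetric) to get the inequality oriented correctly, is also right. (Your remark that the gap must persist on every $L'\subset L$ is superfluous: condition (iv) is universally quantified over $L$, so a single bad $L$ already negates it.)

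The gap is in $(i)\Rightarrow(ii)$. You factor the shuffle $\pi$ into adjacent transpositions exchanging an $f$-variable with a $g$-variable and claim each is handled by ``the two-sequence commutation in the definition of stability, applied after freezing the other $k-2$ variables along their ultrafilters.'' This fails unless the two swapped limits are the innermost pair: once the variables lying inside the two being swapped have been limited out, the resulting function of the two swap variables is no longer of the form $(m,n)\mapsto d_M(x_m,y_n)$ for sequences $(x_m),(y_n)$ in $M$, and stability gives no information about commuting iterated ultralimits of an arbitrary bounded function of two variables. Concretely, take $k=3$, $l=1$ and $\pi$ with $\pi(1)=3$, $\pi(2)=1$, $\pi(3)=2$, so that the right-hand side of (ii) is $\lim_{n_2,\cU_2}\lim_{n_3,\cU_3}\lim_{n_1,\cU_1}d_M(f(n_1),g(n_2,n_3))$. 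Starting from the left-hand side, the only admissible adjacent $f$--$g$ transposition is at the outermost two positions, where the inner function $\psi(n_1,n_2)=\lim_{n_3,\cU_3}d_M(f(n_1),g(n_2,n_3))$ is not a distance in $M$. The usual repair is to recognise $\psi$ as a distance in the metric ultrapower $M^{\cU_3}$ and invoke the nontrivial fact that ultrapowers of stable metric spaces are again stable, so that one may peel off the innermost limit and induct inside the ultrapower; equivalently, one works with Krivine--Maurey types and proves their convolution commutative and associative, or one proves $(i)\Rightarrow(iv)$ directly by a Ramsey-theoretic subsequence extraction in the style of Raynaud. Without one of these additional inputs, the induction you describe does not close.
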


In light of the above characterization, the definition of
upper stability given in the Introduction is a very natural relaxation
of stability. There is a certain asymmetry in part~(iv) above, and
hence in the definition of upper stability, in the sense that the
permutation is applied only to the right-hand side of the defining
inequality. We will now introduce an a priori stronger version of
upper stability by ``symmetrizing'' this inequality. We will also
introduce a different formulation in terms of cuts instead of
permutations.

Fix $k,l,m\in\bn$ with $k=l+m$. By a \emph{cut of $\{1,\dots,k\}$ of
size $l$ }we mean simply a subset $P$ of $\{1,\dots,k\}$ of size
$l$. If $P=\{p_1,\dots,p_l\}$ with $p_1<\dots<p_l$, then for
$n_1<\dots<n_k$ in $\bn$, we write $(n_i:\,i\in P)$ for
$(n_{p_1},\dots,n_{p_l})$. The proof of Raynaud's characterization of
stability shows the following.

\begin{prop}
  \label{prop:cut-stable-characterization}
  For a metric space $(M,d_M)$ and a constant $C\geq1$, the following
  are equivalent.
  \begin{mylist}{(ii)}
  \item
    For all $k,l,m\in\bn$ with $k=l+m$, free ultrafilters $\cU$,
    bounded functions $f\colon\sizeeq{\bn}{l}\to M$ and
    $g\colon\sizeeq{\bn}{m}\to M$ and cuts $P,Q$ of $\{1,\dots,k\}$ of
    size $l$, we have
    \begin{multline*}
      \lim_{n_1,\cU}\dots\lim_{n_k,\cU}
      d_M\big(f(n_i:\,i\in P),g(n_i:\,i\in P^c)\big)\leq\\
      C\cdot \lim_{n_1,\cU}\dots\lim_{n_k,\cU}
      d_M\big(f(n_i:\,i\in Q),g(n_i:\,i\in Q^c)\big)\ .
    \end{multline*}
  \item
    For all $k,l,m\in\bn$ with $k=l+m$, bounded functions
    $f\colon\sizeeq{\bn}{l}\to M$ and $g\colon\sizeeq{\bn}{m}\to M$,
    infinite sets $L\subset\bn$ and cuts $P,Q$ of $\{1,\dots,k\}$ of
    size $l$, we have
    \begin{multline*}
      \inf_{\nb\in\sizeeq{L}{k}}d_M\big(f(n_i:\,i\in P),g(n_i:\,i\in
      P^c)\big)\leq\\
      C\cdot\sup_{\nb\in\sizeeq{L}{k}}d_M\big(f(n_i:\,i\in Q),g(n_i:\,i\in
      Q^c)\big)\ .
    \end{multline*}
  \end{mylist}
\end{prop}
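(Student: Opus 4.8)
The plan is to deduce both implications from a standard fact about iterated ultrafilter limits, in essentially the form in which it is used to prove Theorem~\ref{thm:raynaud}. For a free ultrafilter $\cU$ on $\bn$, a bounded function $\Phi\colon\sizeeq{\bn}{k}\to\br$, and $a=\lim_{n_1,\cU}\dots\lim_{n_k,\cU}\Phi(n_1,\dots,n_k)$, one has: (a) for every $\vare>0$ there is an infinite $L\subseteq\bn$ with $\abs{\Phi(\nb)-a}<\vare$ for all $\nb\in\sizeeq{L}{k}$; and (b) if $L\in\cU$, then $a\in\bigl[\inf_{\nb\in\sizeeq{L}{k}}\Phi(\nb),\ \sup_{\nb\in\sizeeq{L}{k}}\Phi(\nb)\bigr]$. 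Moreover (a) can be arranged simultaneously for a finite family $\Phi_1,\dots,\Phi_r$ with one common $L$, since a single recursive construction of $L$ controls all of them. Throughout, for a cut $P$ of $\{1,\dots,k\}$ of size $l$ I write $\phi_P\colon\sizeeq{\bn}{k}\to\br$, $\phi_P(\nb)=d_M\bigl(f(n_i:\,i\in P),g(n_i:\,i\in P^c)\bigr)$; this is bounded because $f$ and $g$ are, and similarly for $\phi_Q$.

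To prove (ii)$\Rightarrow$(i), fix $k=l+m$, a free ultrafilter $\cU$, bounded $f,g$ and cuts $P,Q$, and let $a,b$ be the two iterated $\cU$-limits in~(i), so $a=\lim_{n_1,\cU}\dots\lim_{n_k,\cU}\phi_P(\nb)$ and likewise for $b$ with $\phi_Q$. Given $\vare>0$, apply~(a) to the pair $(\phi_P,\phi_Q)$ to get an infinite $L$ with $\phi_P(\nb)>a-\vare$ and $\phi_Q(\nb)<b+\vare$ for all $\nb\in\sizeeq{L}{k}$, whence $\inf_{\nb\in\sizeeq{L}{k}}\phi_P(\nb)\geq a-\vare$ and $\sup_{\nb\in\sizeeq{L}{k}}\phi_Q(\nb)\leq b+\vare$. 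Now~(ii), applied to this $L$, $f$, $g$, $P$, $Q$, gives $a-\vare\leq C(b+\vare)$; letting $\vare\to0$ yields $a\leq Cb$, which is~(i).

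To prove (i)$\Rightarrow$(ii), fix $k=l+m$, bounded $f,g$, an infinite $L\subseteq\bn$ and cuts $P,Q$. Choose a free ultrafilter $\cU$ with $L\in\cU$ (the filter generated by $L$ and the cofinite subsets of $\bn$ is proper since $L$ is infinite; extend it to an ultrafilter, which is then automatically free). Applying~(i) to this $\cU$, $f$, $g$, $P$, $Q$ gives $a\leq Cb$, where $a,b$ are the iterated $\cU$-limits of $\phi_P,\phi_Q$. Since $L\in\cU$, part~(b) gives $\inf_{\nb\in\sizeeq{L}{k}}\phi_P(\nb)\leq a$ and $b\leq\sup_{\nb\in\sizeeq{L}{k}}\phi_Q(\nb)$. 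Chaining, $\inf_{\nb\in\sizeeq{L}{k}}\phi_P(\nb)\leq a\leq Cb\leq C\sup_{\nb\in\sizeeq{L}{k}}\phi_Q(\nb)$, which is~(ii).

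The only step that is not a routine $\vare$-chase, and the one I expect to need the most care, is part~(a) of the quoted fact. One builds $L=\{l_1<l_2<\dots\}$ recursively: writing $c_j(n_1,\dots,n_j)=\lim_{n_{j+1},\cU}\dots\lim_{n_k,\cU}\Phi(n_1,\dots,n_k)$ for the partial limits (so $c_0=a$ and $c_k=\Phi$), at stage $r$ one requires $l_r$ to lie in the set on which, for every $j<r$ and every increasing $j$-tuple $(n_1,\dots,n_j)$ from $\{l_1,\dots,l_{r-1}\}$, one has $\abs{c_{j+1}(n_1,\dots,n_j,l_r)-c_j(n_1,\dots,n_j)}<\vare/k$; this is a finite intersection of $\cU$-sets, hence infinite, so such $l_r$ exists. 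Telescoping these $k$ estimates along any $\nb\in\sizeeq{L}{k}$ gives $\abs{\Phi(\nb)-a}<\vare$, and running the construction for $\phi_P$ and $\phi_Q$ at once produces the common $L$ used above. This is exactly the device behind the implication (iii)$\Rightarrow$(iv) of Theorem~\ref{thm:raynaud}; part~(b) is the elementary observation, applied iteratively over the $k$ coordinates, that a $\cU$-limit of a bounded real function lies between its infimum and supremum over any $\cU$-set.
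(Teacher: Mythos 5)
Your proof is correct and follows exactly the route the paper has in mind: the paper offers no explicit argument, simply remarking that the proposition follows from the proof of Raynaud's characterization, and you have correctly reconstructed that argument via the two standard facts about iterated ultrafilter limits (recursive stabilization on an infinite set for (ii)$\Rightarrow$(i), and trapping the iterated limit between the inf and sup over a set in the ultrafilter for (i)$\Rightarrow$(ii)).
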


\begin{defn}
  We say a metric space $(M,d_M)$ is \emph{$C$-cut stable }if it
  satisfies one of the two equivalent conditions in
  Proposition~\ref{prop:cut-stable-characterization}. We say $M$ is
  \emph{cut stable }if it is $C$-cut stable for some $C\geq1$.
\end{defn}

\begin{rems}
  We recover the notion of upper stability by taking $P$ to be the
  trivial cut $\{1,2,\dots,l\}$ above. We will later see that every
  upper stable metric space is cut stable, although this is not
  entirely trivial. It is clear that upper stability is a Lipschitz
  invariant, and hence in particular an isomorphic invariant for
  Banach spaces. In contrast, stability is an isometric invariant. It
  is easy to find equivalent norms on the stable Banach space
  $\ell_1$, for instance, which is not stable.
\end{rems}

As mentioned in the Introduction, Kalton's original definition of
property~$Q$ was given in terms of a modulus, which we now recall.

\begin{defn}[Kalton~\cite{kalton:07}]
  Let $(M,d_M)$ be a metric space. Given $\vare>0$ and $\delta\geq 0$,
  we say that $M$ has the \emph{$Q(\vare,\delta)$-property }if for all
  $k\in\bn$ and for all functions
  $f\colon\big(\sizeeq{\bn}{k},d_I\big)\to M$ with
  $\lip(f)\leq\delta$, there exists $L\in\infin{\bn}$ such that
  \[
  d_M(f(\mb),f(\nb))<\vare\qquad\text{for all }\mb<\nb\text{ in
  }\sizeeq{L}{k}\ .
  \]
  For $\vare>0$ we define
  \[
  \Delta_M(\vare)=\sup\{\delta\geq 0:\,M \text{ has the
    $Q(\vare,\delta)$-property}\}\ ,
  \]
  and set $q_M$ to be the supremum of all constants $c\geq 0$ for
  which $\Delta_M(\vare)\geq c\vare$ for all $\vare>0$.
\end{defn}

\begin{rem}
  It is immediate from above that $M$ has property~$Q$ if and only if
  $q_M>0$ in which case $q_M^{-1}\leq Q_M\leq2q_M^{-1}$. Kalton proved
  in~\cite{kalton:07} that for a stable metric space $M$, $q_M\geq1$,
  and for a reflexive Banach space $X$, $q_X\geq1/2$ (and
  $Q_X\leq2$). Note that for any infinite-dimensional Banach space
  $X$, $q_X\leq 1$. Indeed, for any $\delta>1$ there exists
  $f\colon\bn\to X$ with $1<\norm{f(m)-f(n)}<\delta$ for all $m<n$,
  and thus $X$ does not have the $Q(1,\delta)$-property.
\end{rem}

\begin{rem}
  Let $(e_i)$ and $(s_i)$ denote the unit vector basis and,
  respectively, the summing basis of $\co$. Thus,
  $s_n=\sum_{i=1}^ne_i$, $n\in\bn$. For $k\in\bn$ define
  $f\colon(\sizeeq{\bn}{k},d_I)\to\co$ by
  $f(\nb)=\sum_{i=1}^ks_{n_i}$. Note that
  $\norm{f(\mb)-f(\nb)}=d_I(\mb,\nb)=1$ for adjacent vertices, and
  hence $\lip(f)=1$. Moreover, $\norm{f(\mb)-f(\nb)}=k$ for all
  $\mb<\nb$ in $\sizeeq{\bn}{k}$. Thus $\co$ has the worst possible
  behaviour for property~$Q$. Indeed, this behaviour of the summing
  basis motivated the proof by Kalton that $\co$ does not uniformly or
  coarsely embed into a reflexive space. It was also the motivation
  behind the interlacing graph. Indeed,
  \[
  \norm{f(\mb)-f(\nb)}\leq d_I(\mb,\nb)\leq2\norm{f(\mb)-f(\nb)}
  \qquad\text{for all }\mb,\nb\in\sizeeq{\bn}{k}
  \]
  and for every $k\in\bn$.
\end{rem}

\begin{rem}
  If follows easily by scaling that for a Banach space $X$, the
  function $\Delta_X$ is linear, and thus $\Delta_X(\vare)=q_X\vare$
  for all $\vare>0$. Another observation of Kalton seems slightly less
  obvious, so we provide a proof next.
\end{rem}

\begin{prop}
  For a Banach space $X$, $\Delta_X(\vare)=\Delta_{B_X}(\vare)$ for
  all $0<\vare<1$.
\end{prop}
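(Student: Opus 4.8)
The plan is to prove the two inequalities $\Delta_X(\vare)\geq\Delta_{B_X}(\vare)$ and $\Delta_{B_X}(\vare)\geq\Delta_X(\vare)$ separately for $0<\vare<1$. One direction is essentially trivial: since $B_X$ is a subset of $X$ with the same metric, every bounded function $f\colon(\sizeeq{\bn}{k},d_I)\to B_X$ is in particular a bounded function into $X$, so any $\delta$ witnessing the $Q(\vare,\delta)$-property for $X$ also witnesses it for $B_X$; this gives $\Delta_{B_X}(\vare)\geq\Delta_X(\vare)$. (Here I should be a little careful: the modulus $\Delta$ quantifies over all $k$ and all $\lip(f)\le\delta$, and the defining conclusion is an internal statement about distances in the target, so restricting the target only makes the property easier to have.)

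For the reverse inequality $\Delta_X(\vare)\geq\Delta_{B_X}(\vare)$, suppose $X$ has the $Q(\vare',\delta)$-property for every $\vare'$ in some range with $\delta=\Delta_{B_X}(\vare)-$something; more precisely, fix $\delta<\Delta_{B_X}(\vare)$, so $B_X$ has the $Q(\vare,\delta)$-property, and I want to show $X$ has the $Q(\vare,\delta)$-property (or the $Q(\vare,\delta')$-property for any $\delta'<\delta$, which suffices after taking sup). Let $f\colon(\sizeeq{\bn}{k},d_I)\to X$ with $\lip(f)\leq\delta$. The idea is to reduce to the unit ball by a translation-and-scaling argument combined with passing to a subset of $\bn$ on which $f$ has small oscillation. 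First I would translate: replace $f$ by $f-f(\mb_0)$ for a fixed $\mb_0$, which changes nothing. Now I would use a standard Ramsey/compactness argument: since $\lip(f)\leq\delta$ and the interlacing graph $\sizeeq{L}{k}$ has finite diameter depending only on $k$ (in fact diameter $k$), the image $f(\sizeeq{\bn}{k})$... no, that set need not be bounded in norm as $\bn$ is infinite, but restricted to any $\sizeeq{L}{k}$ for the specific structure we need, we only ever compare $f(\mb)$ and $f(\nb)$ for $\mb<\nb$, which need not be adjacent. So boundedness is genuinely used.

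The main obstacle, therefore, is handling the (possible) unboundedness of $f$ on $X$: the definition of $\Delta_X$ quantifies over functions $f$ that need only satisfy $\lip(f)\le\delta$, not a norm bound, whereas to invoke the $B_X$-property I need a function into (a scaled copy of) $B_X$. The resolution I would pursue: given $f\colon(\sizeeq\bn k,d_I)\to X$ with $\lip(f)\le\delta$, pass first to an infinite $L\subseteq\bn$ on which the diagonal sequence $(f(\{i_1,\dots,i_k\}))$ stabilizes weakly or along which $f$ becomes ``asymptotically bounded'' in an appropriate sense — but more elementarily, observe that along any infinite $L$, for the purposes of the $Q(\vare,\delta)$-conclusion we only need to control $f(\mb)-f(\nb)$ for $\mb<\nb$ in $\sizeeq Lk$, and by a diagonal argument one can first find $L$ such that $\{f(\mb):\mb\in\sizeeq Lk\}$ has diameter at most $\delta k$ (indeed at most $\lip(f)\cdot\diam(\sizeeq Lk,d_I)\le\delta k$ fails in general, so instead: find $L$ so that this image set is contained in a ball of some radius $R$, using weak sequential compactness of bounded sets only after first forcing boundedness). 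Concretely: since $\lip(f)\le\delta$, consecutive values along a chain of adjacent vertices differ by at most $\delta$, but arbitrary $\mb<\nb$ in $\sizeeq Lk$ are joined by a path of length at most $k$ in $d_I$, hence $\norm{f(\mb)-f(\nb)}\le\delta\cdot d_I(\mb,\nb)\le\delta k$; so after translating by $f(\mb_0)$ for any fixed $\mb_0\in\sizeeq\bn k$, the \emph{entire} image lies in $\delta k\cdot B_X$. (This is the point I glossed: any two vertices of $\sizeeq Lk$ are at $d_I$-distance at most $k$, since one can ``push forward'' one $k$-set to another through interlacing moves in $k$ steps; I would include a one-line justification of this diameter bound.) Now rescale: $h=\frac{1}{\delta k}(f-f(\mb_0))$ maps into $B_X$ with $\lip(h)\le 1/k$. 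Choosing the parameters so that $\delta=\Delta_{B_X}(\vare)$ and applying the $Q(\vare/(\delta k),1/k)$-property of $B_X$ — which holds because $\Delta_{B_X}$ is linear with slope $q_{B_X}$, and $\frac{1}{k}\le q_{B_X}\cdot\frac{\vare}{\delta k}$ iff $\delta\le q_{B_X}\vare=\Delta_{B_X}(\vare)$ — yields $L\in\infin\bn$ with $\norm{h(\mb)-h(\nb)}<\vare/(\delta k)$ for all $\mb<\nb$ in $\sizeeq Lk$, i.e. $\norm{f(\mb)-f(\nb)}<\vare$. Hence $X$ has the $Q(\vare,\delta)$-property for every $\delta<\Delta_{B_X}(\vare)$, giving $\Delta_X(\vare)\ge\Delta_{B_X}(\vare)$ and completing the proof. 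The only genuinely nonroutine step is the diameter-$k$ bound for the interlacing graph on $\sizeeq Lk$, which is what makes the translation argument put the image into a fixed multiple of $B_X$; everything else is bookkeeping with the linearity of $\Delta_X$ and $\Delta_{B_X}$.
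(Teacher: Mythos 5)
Your proof has a genuine gap: you invoke that ``$\Delta_{B_X}$ is linear with slope $q_{B_X}$,'' but this is precisely the nontrivial content of the proposition and cannot be taken for granted. The easy scaling argument that makes $\Delta_X$ linear does \emph{not} transfer to $B_X$, because you cannot rescale a function by a factor greater than $1$ and stay inside the ball. What is available for free is only the one-sided inequality: if $c\geq 1$ and $f$ takes values in $B_X$ with $\lip(f)\leq c\delta$, then $c^{-1}f$ also takes values in $B_X$, which gives $\Delta_{B_X}(c\vare)\geq c\,\Delta_{B_X}(\vare)$ (equivalently $\Delta_{B_X}(\vare)/\vare$ is non-decreasing). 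But your argument needs the \emph{reverse} direction. Concretely, after translating by $f(\mb_0)$ and dividing by $\delta k$, you need $B_X$ to have the $Q(\vare/(\delta k),\,1/k)$-property, i.e.\ $\Delta_{B_X}(\vare/(\delta k))\geq 1/k$. You are trying to deduce this from $\Delta_{B_X}(\vare)>\delta$. When $\delta k\leq 1$ this is fine (you are scaling $\vare$ up, so monotonicity of $\Delta_{B_X}(\vare)/\vare$ suffices), but when $\delta k>1$ you are scaling $\vare$ \emph{down}, and monotonicity gives you $\Delta_{B_X}(\vare/(\delta k))\leq\Delta_{B_X}(\vare)/(\delta k)$, the wrong inequality. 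Since $k$ is arbitrary, $\delta k>1$ cannot be avoided, so your argument as written does not close.

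The paper handles exactly this point by first proving that $\Delta_{B_X}$ is genuinely linear on $(0,1)$, and the hard direction of that linearity requires a two-stage argument: given $f\colon\sizeeq{\bn}{k}\to B_X$ with small Lipschitz constant, one \emph{first} applies the $Q$-property once to find $L$ on which $f$ has oscillation below $\vare_2$; only then does one re-anchor $f$ at a fixed vertex of $\sizeeq{L}{k}$ and rescale by $a>1$, which is now legitimate because the oscillation bound guarantees the rescaled function $g(\nb)=a\bigl(f(l_{k+n_1},\dots,l_{k+n_k})-f(\lb)\bigr)$ still lands in $B_X$; then one applies the $Q$-property a second time to $g$. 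Your translation-by-$f(\mb_0)$ idea and your use of the diameter bound $\diam(\sizeeq{\bn}{k},d_I)=k$ are both in the right spirit (the paper also uses finite diameter to get boundedness of $f$, and also uses translation), but the single-pass version you propose cannot control the scale properly without the linearity lemma, which you have assumed rather than proved. Once that lemma is supplied, your reduction from $\Delta_X$ to $\Delta_{B_X}$ is essentially the paper's.
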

\begin{proof}
  Clearly, $\Delta_X(\vare)\leq\Delta_{B_X}(\vare)$ for any
  $\vare>0$. For the converse, we will first show that $\Delta_{B_X}$
  is ``linear'' on $(0,1)$. Fix $b$ with $0<b<1$. Given
  \[
  0<\vare_1<\vare_2\leq b\ ,
  \]
  write $\vare_2=a\vare_1$ for suitable $a>1$. We show that
  $\Delta_{B_X}(\vare_2)=a\Delta_{B_X}(\vare_1)$. This will show
  linearity on $(0,b]$, and hence on $(0,1)$. To show that
  $\Delta_{B_X}(\vare_2)=a\Delta_{B_X}(\vare_1)$, we may clearly
  assume that $a\leq b^{-1}$ as the general case will then follow by
  repeated applications of the special case. The inequality
  $\Delta_{B_X}(\vare_2)\geq a\Delta_{B_X}(\vare_1)$ follows by simple
  scaling. Conversely, assume that $B_X$ has the
  $Q(\vare_2,\delta)$-property. Let $k\in\bn$ and
  $f\colon\big(\sizeeq{\bn}{k},d_I\big)\to B_X$ be a function with
  $\lip(f)\leq\delta/a$. Then in particular,
  $\lip(f)\leq\delta$, and hence there exists $L\in\infin{\bn}$
  such that $\norm{f(\mb)-f(\nb)}<\vare_2$ for all $\mb<\nb$
  in $\sizeeq{L}{k}$. Enumerate $L$ as $l_1<l_2<\dots$ and set
  $\lb=\{l_1,\dots,l_k\}$. Define
  $g\colon\big(\sizeeq{\bn}{k},d_I\big)\to X$ by
  \[
  g(n_1,\dots,n_k)=\big(f(l_{k+n_1},\dots,l_{k+n_k})-f(\lb)\big)\cdot
  a\ .
  \]
  Since $a\vare_2\leq ab\leq 1$, it follows that $g$ takes values in
  $B_X$. Moreover, $\lip(g)\leq a\lip(f)\leq\delta$, and hence
  there exists $N\in\infin{\bn}$ such that
  $\norm{g(\mb)-g(\nb)}<\vare_2$ for all $\mb<\nb$ in
  $\sizeeq{N}{k}$. It follows that $\norm{f(\mb)-f(\nb)}<\vare_1$
  for all $\mb<\nb$ in $\sizeeq{P}{k}$, where
  $P=\{l_{k+n}:\,n\in N\}$. We have proved that $B_X$ has the
  $Q(\vare_1,\delta/a)$-property, and hence
  $a\cdot\Delta_{B_X}(\vare_1)\geq\Delta_{B_X}(\vare_2)$, as
  required.

  Having shown linearity of $\Delta_{B_X}$ on $(0,1)$, we now finish
  the proof by showing that $\Delta_{B_X}(\vare)\leq\Delta_X(\vare)$
  for any $0<\vare<1$.  Assume that $0\leq\delta<\Delta_{B_X}(\vare)$
  for some $\delta$ and $0<\vare<1$. Let $k\in\bn$ and
  $f\colon\big(\sizeeq{\bn}{k},d_I\big)\to X$ be a function with
  $\lip(f)\leq\delta$. Then for some $n\in\bn$, the function
  $n^{-1}f$ takes values in $B_X$, and moreover
  $\lip(n^{-1}f)\leq{}n^{-1}\delta<n^{-1}\Delta_{B_X}(\vare)=\Delta_{B_X}(n^{-1}\vare)$. Hence
  there exists $L\in\infin{\bn}$ such that
  $\norm{n^{-1}f(\mb)-n^{-1}f(\nb)}<n^{-1}\vare$ for all
  $\mb<\nb$ in $\sizeeq{L}{k}$. It follows that $X$ has the
  $Q(\vare,\delta)$-property, and thus
  $\Delta_X(\vare)\geq\Delta_{B_X}(\vare)$ as required.
\end{proof}

The following result shows that property~$Q$ is preserved under coarse
and uniform embeddings with Banach space domain and metric space
target. This is a trivial extension
of~\cite{kalton:07}*{Theorem~4.2}. We give the short proof for
convenience. Let us first recall that for a function
$f\colon(M,d_M)\to(N,d_N)$ between metric spaces, its \emph{modulus of
continuity $\omega_f$ }and \emph{compression modulus $\rho_f$ }are
defined, respectively, by
\[
\omega_f(t)=\sup \{ d_N(f(x),f(y)):\,d_M(x,y)\leq t\}
\]
and
\[
\rho_f(t)=\inf \{ d_N(f(x),f(y)):\,d_M(x,y)\geq t\}
\]
for $t\in[0,\infty)$. Note that $f$ is a uniform embedding if and only
if $\lim_{t\to0^+}\omega_f(t)=0$ and $\rho_f(t)>0$ for all $t>0$, and
that $f$ is a coarse embedding if and only if
$\lim_{t\to\infty}\rho_f(t)=\infty$ and $\omega_f(t)<\infty$ for all
$t>0$. Note also that if $M$ is a graph and $d_M$ is the graph
distance, then $\lip(f)=\omega_f(1)$.

\begin{prop}
  \label{prop:inheriting-Q}
  Let $h\colon (M,d_M)\to(N,d_N)$ be a function between metric spaces,
  and assume that $N$ has property~$Q$. Then for all $\vare>0$,
  \[
  \Delta_M(\vare)\geq \sup\{\delta\geq 0:\,
  q_N\cdot\rho_h(\vare)>\omega_h(\delta)\}\ .
  \]
  This has the following consequences.
  \begin{mylist}{(ii)}
  \item
    If $h$ is a uniform embedding, then $\Delta_M(\vare)>0$ for all
    $\vare>0$.
  \item
    If $h$ is a coarse embedding, then $\Delta_M(\vare)\to\infty$ as
    $\vare\to\infty$.
  \end{mylist}
  In particular, if a Banach space $X$ embeds coarsely into a metric
  space with property~$Q$ or if the unit ball $B_X$ embeds uniformly
  into a metric space with property~$Q$, then $X$ has property~$Q$.
\end{prop}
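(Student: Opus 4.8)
The plan is to establish the displayed inequality directly from the definitions of the moduli $\omega_h$, $\rho_h$ and of $\Delta_M$, $\Delta_N$, and then read off the two consequences. Fix $\vare>0$ and fix any $\delta\geq0$ with $q_N\cdot\rho_h(\vare)>\omega_h(\delta)$ (if there is no such $\delta$ there is nothing to prove). It is enough to show that $M$ has the $Q(\vare,\delta)$-property, because then $\Delta_M(\vare)\geq\delta$, and taking the supremum over all such $\delta$ yields the claim. So let $k\in\bn$ and let $f\colon(\sizeeq{\bn}{k},d_I)\to M$ satisfy $\lip(f)\leq\delta$. I would pass to the composition $h\circ f\colon(\sizeeq{\bn}{k},d_I)\to N$: for any two adjacent vertices $\mb,\nb$ one has $d_M(f(\mb),f(\nb))\leq\lip(f)\leq\delta$, and since $\omega_h$ is non-decreasing this gives $d_N\big(h(f(\mb)),h(f(\nb))\big)\leq\omega_h(\delta)$, so $\lip(h\circ f)\leq\omega_h(\delta)$.

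Next I would bring in property~$Q$ of $N$. Since $N$ has property~$Q$ we have $q_N>0$, and from the definition of $q_N$ one gets $\Delta_N(t)\geq q_N t$ for every $t>0$; applying this with $t=\rho_h(\vare)$ and using the assumption on $\delta$ gives $\Delta_N\big(\rho_h(\vare)\big)\geq q_N\rho_h(\vare)>\omega_h(\delta)$. As the set of admissible second parameters of the $Q(\cdot,\cdot)$-property for a fixed first parameter is downward closed, it follows that $N$ has the $Q\big(\rho_h(\vare),\omega_h(\delta)\big)$-property. Applying this to $h\circ f$, whose Lipschitz constant is at most $\omega_h(\delta)$, we obtain $L\in\infin{\bn}$ with $d_N\big(h(f(\mb)),h(f(\nb))\big)<\rho_h(\vare)$ for all $\mb<\nb$ in $\sizeeq{L}{k}$. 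Finally, by the definition of the compression modulus, the inequality $d_M(f(\mb),f(\nb))\geq\vare$ would force $d_N\big(h(f(\mb)),h(f(\nb))\big)\geq\rho_h(\vare)$; hence $d_M(f(\mb),f(\nb))<\vare$ for all $\mb<\nb$ in $\sizeeq{L}{k}$, which is exactly the $Q(\vare,\delta)$-property. This proves the displayed inequality.

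For the consequences: if $h$ is a uniform embedding then $\rho_h(\vare)>0$ for every $\vare>0$, so $q_N\rho_h(\vare)>0$, and since $\omega_h(t)\to0$ as $t\to0^+$ there is $\delta>0$ with $\omega_h(\delta)<q_N\rho_h(\vare)$; hence $\Delta_M(\vare)\geq\delta>0$, which is~(i). If $h$ is a coarse embedding then $\omega_h(\delta)<\infty$ for each $\delta>0$ while $\rho_h(\vare)\to\infty$ as $\vare\to\infty$, so for each fixed $\delta>0$ we have $q_N\rho_h(\vare)>\omega_h(\delta)$ once $\vare$ is large enough, giving $\liminf_{\vare\to\infty}\Delta_M(\vare)\geq\delta$ and hence~(ii). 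For the last sentence, recall that for a Banach space $X$ the function $\Delta_X$ is linear, so $\Delta_X(\vare)=q_X\vare$: if $X$ embeds coarsely into $N$ then~(ii) gives $\Delta_X(\vare)>0$ for some $\vare>0$, so $q_X>0$; if $B_X$ embeds uniformly into $N$ then~(i) applied with $M=B_X$ gives $\Delta_{B_X}(\vare)>0$ for all $\vare\in(0,1)$, and by the proposition proved above asserting $\Delta_X(\vare)=\Delta_{B_X}(\vare)$ for $0<\vare<1$ we again obtain $q_X>0$. In either case $X$ has property~$Q$.

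I do not expect a serious obstacle: the argument is essentially an unwinding of definitions. The only points requiring care are the bookkeeping of strict versus non-strict inequalities, so that $\omega_h(\delta)<\Delta_N\big(\rho_h(\vare)\big)$ genuinely permits us to invoke property~$Q$ of $N$, and recording at the outset that $N$ having property~$Q$ means $q_N>0$.
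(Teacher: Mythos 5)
Your proof is correct and takes essentially the same approach as the paper: the key step in both is transferring the modulus bounds through $h$ (so that $\lip(f)\leq\delta$ gives $\lip(h\circ f)\leq\omega_h(\delta)$ and $d_M(f(\mb),f(\nb))\geq\vare$ gives $d_N(hf(\mb),hf(\nb))\geq\rho_h(\vare)$), combined with $\Delta_N\geq q_N\cdot\mathrm{id}$. The only cosmetic difference is that the paper phrases this as a contrapositive (``if $M$ fails $Q(\vare,\delta)$ then $\omega_h(\delta)\geq q_N\rho_h(\vare)$'') whereas you argue directly; the content is the same.
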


\begin{proof}
  Let $\delta\geq 0$ and $\vare>0$ be such that $M$ fails the
  $Q(\vare,\delta)$-property. Then there exist $k\in\bn$ and a
  function $f\colon\sizeeq{\bn}{k}\to M$ such that
  $\omega_f(1)\leq\delta$ and for all $L\in\infin{\bn}$ there exist
  $\mb<\nb$ in $\sizeeq{L}{k}$ with
  $d_M(f(\mb),f(\nb))\geq\vare$. Then
  $\omega_{hf}(1)\leq\omega_h(\delta)$ and for all $L\in\infin{\bn}$
  there exist $\mb<\nb$ in $\sizeeq{L}{k}$ with
  $d_N(hf(\mb),hf(\nb))\geq\rho_h(\vare)$. It follows that $N$ does
  not have the $Q(\rho_h(\vare),\omega_h(\delta))$-property, and hence
  \[
  \omega_h(\delta)\geq \Delta_N(\rho_h(\vare))\geq
  q_N\cdot\rho_h(\vare)\ .
  \]
  The result follows.
\end{proof}

Recall that for $k\in\bn$, the interlacing graph on the vertex set
$\sizeeq{\bn}{k}$ joins two vertices $\mb=\{m_1,\dots,m_k\}$ and
$\nb=\{n_1,\dots,n_k\}$ if and only if $\mb$ and $\nb$ are
interlacing, \ie either
\begin{mylist}{}
\item
  $m_1\leq n_1\leq m_2\leq n_2\leq\dots\leq m_k\leq n_k$ or
\item
  $n_1\leq m_1\leq n_2\leq m_2\leq\dots\leq n_k\leq m_k$.
\end{mylist}

\begin{rem}
  Using the above notation, let $m(x)=\abs{\{i:\,m_i\leq x\}}$,
  $x\in\bn$. The function $x\mapsto m(x)-n(x)$ measures the
  discrepancy between $\mb$ and $\nb$, which in turn measures the
  graph distance between $\mb$ and $\nb$:
  \[
  d_I(\mb,\nb)=\max_{x\in\bn}\big(m(x)-n(x)\big)-
  \min_{x\in\bn}\big(m(x)-n(x)\big)\ .
  \]
  It follows that the diameter of the interlacing graph on
  $\sizeeq{\bn}{k}$ is $k$, and moreover, $d_I(\mb,\nb)=k$ if and only
  if either $\mb$ lies strictly between two consecutive elements of
  $\nb$ including the degenerate cases $\mb<\nb$ (which means that
  $m_k<n_1$) and $\nb<\mb$, or vice versa. As noted earlier, the
  summing basis of $\co$ provides another description of the graph
  distance $d_I$.
\end{rem}

We will need the following stronger version of the interlacing
adjacency relation.

\begin{defn}
  Given $k\in\bn$ and $\mb,\nb\in\sizeeq{\bn}{k}$, we say that the
  pair $(\mb,\nb)$ is \emph{strongly interlacing }if either
  \begin{mylist}{}
  \item
    $m_1\leq n_1<m_2\leq n_2<\dots<m_k\leq n_k$ or
  \item
    $n_1\leq m_1<n_2\leq m_2<\dots< n_k\leq m_k$.
  \end{mylist}
\end{defn}

\begin{lem}
  \label{lem:strongly-interlacing}
  Given $k\in\bn$ and an interlacing pair $(\mb,\nb)$ in
  $\sizeeq{2\bn}{k}$, there exists $\pb\in\sizeeq{\bn}{k}$ such that
  the pairs $(\mb,\pb)$ and $(\nb,\pb)$ are strongly interlacing.
\end{lem}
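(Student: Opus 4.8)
The statement asks: given an interlacing pair $(\mb,\nb)$ of $k$-element subsets of $2\bn$, find $\pb\in\sizeeq{\bn}{k}$ so that both $(\mb,\pb)$ and $(\nb,\pb)$ are *strongly* interlacing. The point of passing to $2\bn$ is that there is room between consecutive even integers to insert an odd integer. So the plan is to define $\pb$ explicitly by, at each coordinate, picking an integer that sits strictly between the relevant elements of $\mb$ and $\nb$, exploiting the "half-integer slack" available in $2\bn$.

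Let me think about the structure. Without loss of generality assume $m_1\le n_1\le m_2\le n_2\le\dots\le m_k\le n_k$ (the interlacing hypothesis, one of the two cases; the other is symmetric). I want $\pb=\{p_1<\dots<p_k\}$ with $m_1\le p_1<m_2\le p_2<\dots<m_k\le p_k$ and simultaneously $n_1\le p_1<n_2\le p_2<\dots<n_k\le p_k$ — wait, I need to be careful about which of $(\mb,\pb)$, $(\nb,\pb)$ gets the "$\mb$ on the left" orientation. Since $m_i\le n_i$ for all $i$, it is natural to try to place $p_i$ just above $n_i$: concretely, set $p_i = n_i+1$. Then from $n_i\le p_i$ and (because all $n_j\in 2\bn$) $p_i=n_i+1 < n_{i+1}$ whenever $n_i<n_{i+1}$, and when $n_i=n_{i+1}$ this breaks. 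So a naive uniform choice fails when there are repeated values, and the main technical nuisance will be handling coincidences among the $m_i$'s and $n_i$'s.

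To handle repetitions cleanly, here is the approach I would actually carry out. Since $\mb\cup\nb$ is a set of even integers, the $2k$ (with multiplicity) values $m_1,n_1,m_2,n_2,\dots,m_k,n_k$ — listed in this weakly increasing order — can be "separated" by choosing, for each index $i$, an integer $p_i$ with $n_i\le p_i$ and $p_i$ strictly less than the next element $m_{i+1}$ of the list (for $i<k$), and strictly less than $n_{i+1}$; we just need enough odd integers in the gaps. Formally: define $p_i$ to be the smallest odd integer that is $\ge n_i$. Because $n_i$ is even, $p_i=n_i+1$, so $n_i\le p_i$ (with $<$ strictly since $p_i$ odd, $n_i$ even — this already upgrades $\le$ to the pattern we want on the $n$-side). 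For the strict inequalities $p_i<m_{i+1}$ and $p_i<n_{i+1}$: if $n_i<m_{i+1}$ then since both are even, $m_{i+1}\ge n_i+2>p_i$; if instead $n_i=m_{i+1}$, then $n_i=m_{i+1}=n_{i+1}=\dots$ up to some block, and here I must nudge: replace the block of equal $p$-values by distinct odd integers $n_i+1<n_i+3<\dots$, which fit because the *next strictly larger* even value is $\ge n_i+2j$ for a block of length $j$. Then check $m_i\le n_i < p_i$, giving $m_i\le p_i$ as well, and $p_{i-1}<m_i$ by the same block argument on the previous side. This yields $m_1\le p_1<m_2\le p_2<\dots<m_k\le p_k$, i.e. $(\mb,\pb)$ strongly interlacing, and $n_1\le p_1<n_2\le p_2<\dots<n_k\le p_k$, i.e. $(\nb,\pb)$ strongly interlacing.

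The one genuinely fiddly point — the main obstacle — is the bookkeeping around maximal blocks of coincident values in the sequence $m_1\le n_1\le m_2\le\dots\le m_k\le n_k$: one has to verify that within such a block of common value $2t$ and length $r$ (counting how many of the $m_i,n_i$ equal $2t$), the chosen odd integers $2t+1,2t+3,\dots$ stay below the next even value in the sequence, which requires that next value to be $\ge 2t+2\lceil r/2\rceil$ — true because it is even and strictly larger, but one must confirm the count of odd slots needed never exceeds the count of even slots available, and this is exactly where the hypothesis "subsets of $2\bn$" (rather than general $\bn$) is used. Once this block analysis is written out, assembling $\pb$ and reading off the two strong-interlacing chains is routine. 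I would present the construction block-by-block along the merged weakly increasing list, then verify the two displayed inequality chains.
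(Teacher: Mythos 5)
Your basic idea---exploit the slack available in $2\bn$ to wedge a point between consecutive values---is the right one, and you correctly identify that the naive uniform choice $p_i=n_i+1$ breaks when $n_i=m_{i+1}$. But the repair you then sketch rests on a false premise: you assert that $n_i=m_{i+1}$ forces $n_i=m_{i+1}=n_{i+1}=\dots$, i.e., a long block of coincident values. That cannot happen, because $\mb$ and $\nb$ are $k$-element \emph{sets}: $m_i<m_{i+1}$ and $n_i<n_{i+1}$ strictly. So in the merged weakly increasing list $m_1\le n_1\le m_2\le n_2\le\cdots\le m_k\le n_k$, any maximal run of equal values has length at most $2$ (either $m_i=n_i$ or $n_i=m_{i+1}$, and these cannot chain). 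Consequently there is no ``block of equal $p$-values'' to spread out, the count-of-odd-slots-versus-even-slots analysis you invoke never arises, and the obstruction---$p_i=n_i+1>m_{i+1}$ when $n_i=m_{i+1}$---remains unresolved. Placing $p_i$ \emph{above} $n_i$ is simply the wrong side: it is incompatible with the requirement $p_i<m_{i+1}$ precisely in the one nondegenerate case.

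The paper's proof keeps your framing but inserts $\pb$ from below rather than above: with the same normalization $m_1\le n_1\le m_2\le\cdots\le m_k\le n_k$, it sets $p_i=n_i-1$ if $n_i=m_{i+1}$ and $p_i=n_i$ otherwise (and $p_k=n_k$). Then $p_i\le n_i$ and $p_i<m_{i+1}$ by construction, $m_i\le p_i$ because $m_i<n_i$ whenever $p_i=n_i-1$ (as $m_i<m_{i+1}=n_i$), and $n_i<p_{i+1}$ because $n_{i+1}\ge n_i+2$ (here is where $\nb\subset 2\bn$ is used), so $p_{i+1}\ge n_{i+1}-1>n_i$. This gives $m_1\le p_1<m_2\le p_2<\cdots<m_k\le p_k$ and $p_1\le n_1<p_2\le n_2<\cdots<p_k\le n_k$ with a single clean case split and no block bookkeeping. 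I recommend discarding the block argument and adopting the downward-nudge choice; the rest of your outline then goes through verbatim.
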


\begin{proof}
  Without loss of generality, we may assume that
  \[
  m_1\leq n_1\leq m_2\leq n_2\leq\dots\leq m_k\leq n_k\ .
  \]
  Define $\pb=\{p_1,\dots,p_k\}$ in $\sizeeq{\bn}{k}$ by
  \[
  p_i=\begin{cases}%
  n_i-1 & \text{if }n_i=m_{i+1}\ ,\\
  n_i & \text{if }n_i<m_{i+1}\ .
  \end{cases}
  \]
  It is straightforward to verify that this works.
\end{proof}

We are now ready to state and prove the main result of this section.

\begin{thm}
  \label{thm:property-Q-implies-cut-stable}
  A metric space with property~$Q$ is cut stable.
\end{thm}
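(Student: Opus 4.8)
The goal is: if $(M,d_M)$ has property $Q$ (with constant $Q_M$, or equivalently $q_M>0$), then $M$ is $C$-cut stable for some explicit $C$. I will verify condition~(ii) of Proposition~\ref{prop:cut-stable-characterization}: given $k=l+m$, bounded functions $f\colon\sizeeq{\bn}{l}\to M$, $g\colon\sizeeq{\bn}{m}\to M$, an infinite set $L\subset\bn$, and two cuts $P,Q$ of $\{1,\dots,k\}$ of size $l$, I must find, after passing to a further infinite subset, that the infimum over $\sizeeq{L}{k}$ of $d_M(f(n_i:i\in P),g(n_i:i\in P^c))$ is at most $C$ times the supremum over $\sizeeq{L}{k}$ of $d_M(f(n_i:i\in Q),g(n_i:i\in Q^c))$.

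\textbf{Step 1: reduce to the case $Q$ is the trivial cut.} By passing from $g$ to the composition of $g$ with a reindexing, and from $f$ similarly, it suffices to handle one fixed cut against the trivial cut $\{1,\dots,l\}$; composing the two instances (trivial $\to P$ and trivial $\to Q$) with the triangle inequality on $d_M$ will give the general statement with $C$ replaced by something like $2C$ or $C^2$. So assume $Q=\{1,\dots,l\}$, $Q^c=\{l+1,\dots,k\}$, and write $R:=\sup_{\nb\in\sizeeq{L}{k}}d_M(f(n_1,\dots,n_l),g(n_{l+1},\dots,n_k))$; I want $\inf_{\nb}d_M(f(n_i:i\in P),g(n_i:i\in P^c))\leq C\cdot R$ on a subset.

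\textbf{Step 2: build a Lipschitz map on an interlacing graph.} Define $h\colon\sizeeq{\bn}{k}\to M$ by $h(\nb)=f(n_1,\dots,n_l)$ if we think of the first $l$ coordinates as ``left'' — more precisely, following Kalton's idea, assign to a vertex $\nb=\{n_1<\dots<n_k\}$ the value $f(n_1,\dots,n_l)$ when the vertex is ``in $f$-position'' and $g(n_{l+1},\dots,n_k)$ when it is in ``$g$-position'', interpolating so that along an interlacing edge the two endpoints differ by one of the quantities bounded by $R$. Concretely: the interlacing graph on $\sizeeq{\bn}{k}$ is connected with diameter $k$, and an interlacing edge from $\nb$ to $\nb'$ can be arranged (using Lemma~\ref{lem:strongly-interlacing} to pass to strongly interlacing pairs, on $\sizeeq{2\bn}{k}$ first) so that $d_M(h(\nb),h(\nb'))\leq R$ or $\leq$ some fixed multiple of $R$. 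Then $\lip(h)\leq c_0 R$ for an absolute constant $c_0$ (coming from Lemma~\ref{lem:strongly-interlacing} and the number of edges needed).

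\textbf{Step 3: apply property $Q$.} By property $Q$ there is $L'\in\infin{\bn}$, $L'\subset L$, with $\sup_{\mb,\nb\in\sizeeq{L'}{k}}d_M(h(\mb),h(\nb))\leq Q_M\lip(h)\leq c_0 Q_M R$. Now the infimum I want to bound is, for a suitable choice of vertices in $\sizeeq{L'}{k}$, the distance between an $f$-value at cut $P$ and a $g$-value at $P^c$; but both of these are $h$-values (or triangle-inequality-close to $h$-values) at vertices of $\sizeeq{L'}{k}$, so the distance is $\leq c_0 Q_M R$. Choosing any single $\nb\in\sizeeq{L'}{k}$ then gives $\inf_{\nb\in\sizeeq{L'}{k}}d_M(f(n_i:i\in P),g(n_i:i\in P^c))\leq c_0 Q_M R$, which is condition~(ii) with $C$ of order $Q_M$.

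\textbf{Main obstacle.} The crux — and the reason the remark in the paper says ``this is not entirely trivial'' — is Step~2: realizing the desired pair of values $(f(n_i:i\in P),g(n_i:i\in P^c))$ and $(f(n_j:j\in Q),g(n_j:j\in Q^c))$ as values of one Lipschitz-on-$d_I$ map, so that \emph{moving the cut} corresponds to \emph{moving along bounded-length paths} in the interlacing graph. One must check that sliding a single index of the cut past a coordinate changes the vertex by a strongly interlacing step, so that $h$ only ever jumps by quantities of the form $d_M(f(\cdots),g(\cdots))$ that are controlled by $R$ (after the $\sizeeq{2\bn}{k}$ reduction and Lemma~\ref{lem:strongly-interlacing}), rather than by uncontrolled distances between two $f$-values or two $g$-values. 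Getting the bookkeeping right — which coordinates are ``active'', how many steps a path between cut $P$ and cut $Q$ needs, and that each step is an interlacing (not just adjacent) move — is the technical heart; everything else is the triangle inequality and a single invocation of property $Q$.
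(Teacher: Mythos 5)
Your Step~2 is where the argument breaks down, and the gap is not a matter of bookkeeping but of feasibility. You want a single map $h\colon\sizeeq{\bn}{k}\to M$ that is Lipschitz on $d_I$ with constant $O(B)$ (where $B$ is the supremum at cut $Q$), and you want $h$ to take only $f$-values at some vertices and $g$-values at others, with every interlacing edge pairing an $f$-value with a $g$-value so that $\lip(h)$ is controlled by the $f$-to-$g$ distances that define $B$. But that would require the interlacing graph on $\sizeeq{\bn}{k}$ to be bipartite with respect to your $f$/$g$ colouring, and the interlacing graph is not bipartite: for instance $\{1,3\}$, $\{2,3\}$, $\{2,4\}$ form a triangle. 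Any such colouring has monochromatic edges, on which $d_M(h(\mb),h(\nb))$ is an $f$-to-$f$ or $g$-to-$g$ distance with no a~priori bound in terms of $B$. There is a further structural mismatch: the quantity you need to bound, $d_M\bigl(f(n_i:\,i\in P),g(n_i:\,i\in P^c)\bigr)$, is a distance between values of $f$ and $g$ evaluated on complementary pieces of the \emph{same} $\nb$, so it cannot straightforwardly be realised as $d_M(h(\nb_1),h(\nb_2))$ for two distinct vertices $\nb_1,\nb_2$ in a concentration set. Your Step~1 reduction is also suspect: cut stability compares an infimum at $P$ to a supremum at $Q$, and chaining ``$\inf_P\leq C\sup_{\mathrm{triv}}$'' with ``$\inf_Q\leq C\sup_{\mathrm{triv}}$'' does not produce ``$\inf_P\leq C'\sup_Q$''; you would need a $\sup$-to-$\sup$ inequality, which is a different beast. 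Reindexing the cut does not help either, since cuts select which coordinates of the fixed ordered tuple $n_1<\dots<n_k$ are fed to $f$, and that interacts with the order structure in a way a relabelling of $\{1,\dots,k\}$ cannot absorb.

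The paper's proof resolves the difficulty by not combining $f$ and $g$ at all. It applies property~$Q$ \emph{twice}, to $f$ on $\sizeeq{L}{l}$ and to $g$ on $\sizeeq{L}{k-l}$ separately, after first establishing the key estimate $\lip\bigl(f\!\restriction_{\sizeeq{L}{l}}\bigr)\leq 4B$ and $\lip\bigl(g\!\restriction_{\sizeeq{L}{k-l}}\bigr)\leq 4B$ for a sparse $L$. This is exactly the point you flag as the ``technical heart'': an $f$-to-$f$ edge \emph{can} be compared to $f$-to-$g$ distances, but by the triangle inequality through a common auxiliary vertex rather than by colouring the graph. Concretely, for an interlacing pair $(\mb,\nb)$ in $\sizeeq{L}{l}$, Lemma~\ref{lem:strongly-interlacing} yields $\pb$ strongly interlacing with both; the strong interlacing and sparsity of $L$ allow one to pad $\mb$ and $\pb$ into two length-$k$ sequences that agree on positions $Q^c$ and carry $\mb$, respectively $\pb$, on positions $Q$. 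Then $d_M(f(\mb),f(\pb))\leq d_M\bigl(f(\mb),g(\text{common part})\bigr)+d_M\bigl(g(\text{common part}),f(\pb)\bigr)\leq 2B$, and similarly for $(\pb,\nb)$, giving $L_f\leq 4B$. After applying property~$Q$ to $f$ and to $g$, a single triangle inequality
$d_M\bigl(f(n_i:\,i\in P),g(n_i:\,i\in P^c)\bigr)\leq d_M\bigl(f(n_i:\,i\in P),f(n_i:\,i\in Q)\bigr)+d_M\bigl(f(n_i:\,i\in Q),g(n_i:\,i\in Q^c)\bigr)+d_M\bigl(g(n_i:\,i\in Q^c),g(n_i:\,i\in P^c)\bigr)$
on the concentration set finishes the proof with constant $8Q_M+1$. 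If you want to salvage your write-up, replace Steps~1--3 by: (a) the Lipschitz bound $L_f,L_g\leq 4B$ via the strong-interlacing/padding trick; (b) two applications of property~$Q$; (c) the three-term triangle inequality above.
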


\begin{proof}  
  Let $(M,d_M)$ be a metric space with property~$Q$. We will show that
  $M$ is $C$-cut stable with $C=8Q_M+1$. Fix integers $1\leq l<k$,
  bounded functions $f\colon\sizeeq{\bn}{l}\to M$,
  $g\colon\sizeeq{\bn}{k-l}\to M$ and cuts $P,Q$ of $\{1,\dots,k\}$ of
  size $l$. Set
  \begin{align*}
    A &= \inf_{\nb\in\sizeeq{\bn}{k}}
    d_M\big(f(n_i:\,i\in P),g(n_i:\,i\in P^c)\big)\\
    B &=\sup_{\nb\in\sizeeq{\bn}{k}}
    d_M\big(f(n_i:\,i\in Q),g(n_i:\,i\in Q^c)\big)\ .
  \end{align*}
  Put $L=\{2nk:\,n\in\bn\}$ and let $L_f$ and $L_g$ be the Lipschitz
  constants of $f\restrict_{\sizeeq{L}{l}}$ and of
  $g\restrict_{\sizeeq{L}{k-l}}$, respectively. We first show that
  $L_f\leq4B$ and $L_g\leq4B$. To see this, set
  $L'=\{nk:\,n\in\bn\}$ and let $(\mb,\nb)$ be an interlacing
  pair in $\sizeeq{L}{l}$. By Lemma~\ref{lem:strongly-interlacing}
  there exists $\pb\in\sizeeq{L'}{l}$ such that the pairs
  $(\mb,\pb)$ and $(\pb,\nb)$ are strongly interlacing. We
  can then choose $m_1<\dots<m_k$ and $p_1<\dots<p_k$ in $L'$ such
  that $\mb=\{m_i:\,i\in Q\}$, $\pb=\{p_i:\,i\in Q\}$ and $m_i=p_i$
  for all $i\in Q^c$. It follows that
  \begin{multline*}
    d_M(f(\mb),f(\pb))\leq
    d_M\big(f(m_i:\,i\in Q),g(m_i:\,i\in Q^c)\big)\\
    +
    d_M\big(f(p_i:\,i\in Q),g(p_i:\,i\in Q^c)\big)
    \leq 2B\ .
  \end{multline*}
  Similarly, we get $d_M(f(\pb),f(\nb))\leq2B$, and hence
  $d_M(f(\mb),f(\nb))\leq4B$, as required. A similar argument shows that
  $L_g\leq4B$.

  Now fix $C>Q_M$. Since $M$ has property~$Q$, there exists
  $N\in\infin{L}$ such that $d_M(f(\mb),f(\nb))\leq CL_f$ for all
  $\mb,\nb\in\sizeeq{N}{l}$ and $d_M(g(\mb),g(\nb))\leq CL_g$ for all
  $\mb,\nb\in\sizeeq{N}{k-l}$. Now fix any $n_1<\dots<n_k$ in $N$. We
  get
  \begin{eqnarray*}
    A &\leq& d_M\big(f(n_i:\,i\in P),g(n_i:\,i\in P^c)\big)\\
    &\leq& d_M\big(f(n_i:i\in P),f(n_i:i\in Q)\big)\\
    && +
    d_M\big(f(n_i:i\in Q),g(n_i:i\in Q^c)\big)\\
    && +
    d_M\big(g(n_i:i\in Q^c),g(n_i:i\in P^c)\big)
    \leq (8C+1)B\ .
  \end{eqnarray*}
  The result follows.
\end{proof}

The implication that an upper stable metric space has property~$Q$ was
shown in~\cite{baudier:22}. Combining this with
Theorem~\ref{thm:property-Q-implies-cut-stable}, we obtain

\begin{cor}
  The following are equivalent for a metric space $(M,d_M)$.
  \begin{mylist}{(iii)}
  \item
    $M$ has property~$Q$.
  \item
    $M$ is cut stable.
  \item
    $M$ is upper stable.
  \end{mylist}
\end{cor}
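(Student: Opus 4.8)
The plan is to close the cycle of implications $(i)\Rightarrow(ii)\Rightarrow(iii)\Rightarrow(i)$. Two of the three links are already available: $(i)\Rightarrow(ii)$ is exactly Theorem~\ref{thm:property-Q-implies-cut-stable} (with the explicit bound $C=8Q_M+1$), and $(iii)\Rightarrow(i)$, that every upper stable metric space has property~$Q$, is proved in~\cite{baudier:22}. So the only thing left is the implication $(ii)\Rightarrow(iii)$, which I would deduce directly from the definitions. Since the corollary only asserts the qualitative equivalence, no bookkeeping of constants beyond these citations is needed.

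For $(ii)\Rightarrow(iii)$ I would use the ``infinite set'' formulations, \ie Definition~\ref{def:upper-stable} and condition~(ii) of Proposition~\ref{prop:cut-stable-characterization}. Assume $M$ is $C$-cut stable, and fix $1\leq l<k$, bounded functions $f\colon\sizeeq{\bn}{l}\to M$ and $g\colon\sizeeq{\bn}{k-l}\to M$, an infinite set $L\subset\bn$, and a permutation $\pi\in S_k$ preserving the order on $\{1,\dots,l\}$ and on $\{l+1,\dots,k\}$. Apply $C$-cut stability (with the same $f$, $g$, $L$) to the trivial left cut $P=\{1,\dots,l\}$ and to the cut $Q=\{\pi(1),\dots,\pi(l)\}$. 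Because $\pi$ is increasing on $\{1,\dots,l\}$, listing $Q$ in increasing order gives $(\pi(1),\dots,\pi(l))$; because $\pi$ is increasing on $\{l+1,\dots,k\}$, listing $Q^c$ in increasing order gives $(\pi(l+1),\dots,\pi(k))$. Hence for every $n_1<\dots<n_k$ one has
\begin{gather*}
(n_i:\,i\in P)=(n_1,\dots,n_l),\qquad (n_i:\,i\in P^c)=(n_{l+1},\dots,n_k),\\
(n_i:\,i\in Q)=(n_{\pi(1)},\dots,n_{\pi(l)}),\qquad (n_i:\,i\in Q^c)=(n_{\pi(l+1)},\dots,n_{\pi(k)}),
\end{gather*}
so the cut-stability inequality becomes
\begin{multline*}
\inf_{\nb\in\sizeeq{L}{k}}d_M\big(f(n_1,\dots,n_l),g(n_{l+1},\dots,n_k)\big)\leq\\
C\cdot\sup_{\nb\in\sizeeq{L}{k}}d_M\big(f(n_{\pi(1)},\dots,n_{\pi(l)}),g(n_{\pi(l+1)},\dots,n_{\pi(k)})\big)\ ,
\end{multline*}
which is exactly the defining inequality for $C$-upper stability. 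As $f$, $g$, $l$, $k$, $L$ and $\pi$ were arbitrary, $M$ is $C$-upper stable.

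I do not anticipate any real obstacle: the substance of the corollary is Theorem~\ref{thm:property-Q-implies-cut-stable} together with the result of~\cite{baudier:22}, and $(ii)\Rightarrow(iii)$ is merely the observation --- already recorded in the Remarks following the definition of cut stability --- that upper stability is the specialisation of cut stability to the trivial left cut $P=\{1,\dots,l\}$. The only point requiring a moment's care is the bookkeeping identifying the order-preserving permutation $\pi$ of Definition~\ref{def:upper-stable} with the cut $Q=\pi(\{1,\dots,l\})$, which is immediate once one notes that $\pi$ is increasing on each of the two blocks $\{1,\dots,l\}$ and $\{l+1,\dots,k\}$.
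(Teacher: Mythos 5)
Your proof is correct and follows essentially the same route as the paper: $(i)\Rightarrow(ii)$ via Theorem~\ref{thm:property-Q-implies-cut-stable}, $(iii)\Rightarrow(i)$ from~\cite{baudier:22}, and $(ii)\Rightarrow(iii)$ by the observation, made in the remarks following the definition of cut stability, that upper stability is cut stability specialized to the trivial cut $P=\{1,\dots,l\}$; your careful unwinding of the permutation/cut dictionary (using that $\pi$ is increasing on each block so $Q=\pi(\{1,\dots,l\})$ and $(n_i:\,i\in Q)=(n_{\pi(1)},\dots,n_{\pi(l)})$) is exactly what that remark leaves implicit.
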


Combining this with Proposition~\ref{prop:inheriting-Q}, we obtain the
following.

\begin{cor}
  If a Banach space $X$ embeds coarsely into an upper stable metric
  space, or if the unit ball $B_X$ embeds uniformly into an upper
  stable metric space, then $X$ is upper stable.
\end{cor}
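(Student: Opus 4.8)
The plan is to chain together the two preceding corollaries with Proposition~\ref{prop:inheriting-Q}. First I would unpack the hypothesis: in either case we are given an upper stable metric space $M$, and either a coarse embedding $h\colon X\to M$, or a uniform embedding $h\colon B_X\to M$. By the equivalence of property~$Q$, cut stability and upper stability for metric spaces (the preceding corollary, applied to $M$), the space $M$ has property~$Q$.

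Next I would apply Proposition~\ref{prop:inheriting-Q} with target $N=M$. In the coarse case, $X$ is a Banach space embedding coarsely into a metric space with property~$Q$, and in the uniform case $B_X$ embeds uniformly into a metric space with property~$Q$; either way the ``in particular'' clause of Proposition~\ref{prop:inheriting-Q} yields that $X$ has property~$Q$.

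Finally I would invoke the equivalence corollary a second time, now with the metric space $X$ in place of $M$: since $X$ has property~$Q$, it is upper stable, which is precisely the assertion to be proved. The argument is purely a matter of bookkeeping, so I do not anticipate a genuine obstacle; the only point deserving a word of care is that ``$X$ has property~$Q$'' as delivered by Proposition~\ref{prop:inheriting-Q} means exactly $q_X>0$, equivalently $Q_X<\infty$, which is the hypothesis under which the equivalence corollary applies to $X$. If one wanted an explicit constant, it could be read off by composing the bound $C=8Q_M+1$ from Theorem~\ref{thm:property-Q-implies-cut-stable} with the modulus estimate of Proposition~\ref{prop:inheriting-Q} and the relation $q_M^{-1}\leq Q_M\leq 2q_M^{-1}$, but for the qualitative statement no such tracking is needed.
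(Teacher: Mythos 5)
Your proposal is correct and matches the paper's argument, which is simply to combine the equivalence of upper stability, cut stability, and property~$Q$ (the preceding corollary) with Proposition~\ref{prop:inheriting-Q}; your unpacking of the two applications of the equivalence corollary and the one application of the proposition is exactly the intended bookkeeping.
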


\section{Reflexive spaces, upper stability and asymptotic structure}
\label{sec:upper-stability-and-asymptotic-structure}

Since reflexive spaces have property~$Q$ (Kalton~\cite{kalton:07}),
the following is an immediate consequence of
Theorem~\ref{thm:property-Q-implies-cut-stable}.

\begin{cor}
  \label{cor:reflexive-upper-stable}
  Every reflexive Banach space is upper stable.
\end{cor}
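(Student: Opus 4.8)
The plan is simply to chain together Kalton's theorem with the result just established. First, recall from Kalton~\cite{kalton:07} that every reflexive Banach space $X$ has property~$Q$; in fact $Q_X\leq 2$. (This is the one substantive ingredient, and it is re-proved from scratch, by a different route, in Section~\ref{sec:upper-stability-and-asymptotic-structure}.) Applying Theorem~\ref{thm:property-Q-implies-cut-stable} then shows that $X$ is cut stable---indeed $C$-cut stable with $C=8Q_X+1\leq 17$.

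It remains to pass from cut stability to upper stability, and here I would invoke the observation recorded in the Remarks following the definition of cut stability: upper stability is exactly the special case of the infinitary formulation of cut stability in which the cut $P$ is taken to be the trivial cut $\{1,\dots,l\}$. Concretely, given $1\leq l<k$, bounded functions $f\colon\sizeeq{\bn}{l}\to X$ and $g\colon\sizeeq{\bn}{k-l}\to X$, an infinite set $L\subseteq\bn$, and a permutation $\pi\in S_k$ preserving the order on $\{1,\dots,l\}$ and on $\{l+1,\dots,k\}$, one sets $P=\{1,\dots,l\}$ and $Q=\{\pi(1),\dots,\pi(l)\}$. Since $\pi$ is order-preserving on $\{1,\dots,l\}$ we have $\pi(1)<\dots<\pi(l)$, so $Q$ is a cut of $\{1,\dots,k\}$ of size $l$ with $(n_i:\,i\in Q)=(n_{\pi(1)},\dots,n_{\pi(l)})$ and $(n_i:\,i\in Q^c)=(n_{\pi(l+1)},\dots,n_{\pi(k)})$, the latter again because $\pi$ is order-preserving on $\{l+1,\dots,k\}$. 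The cut-stability inequality for this choice of $P$ and $Q$ is then literally the upper-stability inequality for $f,g,L,\pi$, so $X$ is $C$-upper stable with the same constant $C$.

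The only step carrying any weight is the first, and it is entirely borrowed; Kalton's proof that reflexive spaces have property~$Q$ runs through a calculus of limiting operators. If one insisted on an argument avoiding that calculus, the obstacle would be precisely to establish directly that reflexive spaces are cut (equivalently, upper) stable---this is what Section~\ref{sec:upper-stability-and-asymptotic-structure} does, by relating upper stability to the asymptotic structure of Banach spaces in the sense of Maurey, Milman and Tomczak-Jaegermann~\cite{mau-mil-tom:95}, and is the genuinely hard part of the paper. Modulo that input, the corollary is immediate.
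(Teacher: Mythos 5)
Your proposal is correct and is exactly the paper's own argument: cite Kalton's theorem that reflexive spaces have property~$Q$, apply Theorem~\ref{thm:property-Q-implies-cut-stable} to get cut stability, and specialise $P=\{1,\dots,l\}$, $Q=\{\pi(1),\dots,\pi(l)\}$ to recover upper stability, just as the Remark following Proposition~\ref{prop:cut-stable-characterization} points out. The details you fill in on the cut-to-upper passage are accurate and match the paper's intent.
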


In this section, we give a direct proof of
Corollary~\ref{cor:reflexive-upper-stable} without using
property~$Q$. Instead, we shall relate stability and asymptotic
structure. We begin by recalling the notion of asymptotic structure
introduced by Maurey, Milman and
Tomczak-Jaegermann~\cite{mau-mil-tom:95}.

For each $k\in\bn$, let $\cE_k$ denote the set of norms on $\br^k$
with respect to which the standard basis $(e_i)_{i=1}^k$ is a
normalized, monotone basis. Note that $\cE_k$ is a compact subset of
$C\big([-1,1]^k\big)$ by the Arzel\`a--Ascoli theorem. Fix an
infinite-dimensional Banach space $X$ with
norm~$\norm{\cdot}$, and consider the following two-player game between
$\vS$ (subspace chooser) and $\vP$ (point chooser).  At each move of
the game $\vS$ first picks a finite-codimensional subspace $Y$ of $X$
and $\vP$ responds with a vector $x\in Y$ with $\norm{x}=1$. The game
finishes after exactly $k$ moves producing  a sequence
$Y_1,x_1,Y_2,x_2,\dots,Y_k,x_k$. Given a norm $N$ on $\br^k$ and
$\delta>0$, we say that \emph{$\vP$ wins the $(N,\delta)$-game }if
\begin{equation}
  \label{eq:within-delta-of-norm}
  \Bigabs{N\big(\tsum_{i=1}^ka_ie_i\big)-\bignorm{\tsum_{i=1}^ka_ix_i}}
  <\delta\qquad \text{for all }(a_i)_{i=1}^k\in[-1,1]^k\ .
\end{equation}
The \emph{$k^{\text{th}}$ asymptotic structure of $X$ }is the set
$\{X\}_k$ of all norms $N$ on $\br^k$ for which $\vP$ has a winning
strategy for the $(N,\delta)$-game for all $\delta>0$. One of the
results in~\cite{mau-mil-tom:95} states that $\{X\}_k$ is a non-empty
closed subset of $\cE_k$, and moreover it is the smallest closed
subset $\cF$ of $\cE_k$ such that for every $\delta>0$, $\vS$ has a
winning strategy for forcing $\cP$ to choose vectors $x_1,\dots,x_k$
such that~\eqref{eq:within-delta-of-norm} holds for some $N\in\cF$.

Winning strategies in the game described above naturally correspond to
trees. For $k\in\bn$ and $L\subset\bn$ we denote by $\sizeleq{L}{k}$
the set of all subsets of $L$ of size at most~$k$. By a
\emph{(countably branching) tree in $X$ of height $k$ }we mean a
function $t\colon\sizeleq{\bn}{k}\to X$ or
$t\colon\sizeleq{\bn}{k}\setminus\{\emptyset\}\to X$; in the former
case $t$ is a \emph{rooted }tree with root $t(\emptyset)$, and in the 
latter case $t$ is an \emph{unrooted }tree. We say $t$ is
\emph{normalized }if it takes values in the unit sphere $S_X$ of
$X$. Given a sequence $\cUb=(\cU_1,\dots,\cU_k)$ of free ultrafilters,
we say that $t$ is \emph{$\cUb$-weakly null }if for each
$1\leq i\leq k$ we have
\[
\V_{\cU_1}n_1\ \V_{\cU_2}n_2\ \dots\ \V_{\cU_{i-1}}n_{i-1}\quad
\wlim_{n,\cU_i}t(n_1,\dots,n_{i-1},n)=0\ .
\]
In the unrooted case, given
$\nb=\{n_1,\dots,n_k\}\in\sizeeq{\bn}{k}$ with $n_1<\dots<n_k$, the
sequence $\big(t(n_1,\dots,n_i)\big)_{i=1}^k$ is called a \emph{branch
}of $t$.

Since every weakly null sequence is eventually almost contained in any
finite-codimensional subspace, it follows that for every $k\in\bn$ and
$\delta>0$, every normalized, weakly null (with respect to some
sequence of free ultrafilters) unrooted tree in $X$ of height $k$ has
a branch $(x_1,\dots,x_k)$ satisfying~\eqref{eq:within-delta-of-norm}
for some $N\in\{X\}_k$. We shall need a somewhat stronger and more
precise formulation of this statement. Let us fix $k\in\bn$, a
sequence $\cUb=(\cU_1,\dots,\cU_k)$ of free ultrafilters and a
normalized, $\cUb$-weakly null tree $t$ in $X$. Define
\begin{eqnarray*}
  N_t^{\cUb} \colon X\oplus\br^k &\to&\br\\
  N_t^{\cUb}\Big(x\oplus\tsum_{i=1}^ka_ie_i\Big) &=& \lim_{\nb,\cUb}
  \Bignorm{x+\tsum_{i=1}^ka_it(n_1,\dots,n_i)}
\end{eqnarray*}
where $\lim_{\nb,\cUb}$ is an abbreviation for the iterated limit
$\lim_{n_1,\cU_1}\dots\lim_{n_k,\cU_k}$. It is clear that $N_t^{\cUb}$
is a seminorm on $X\oplus\br^k$.

\begin{lem}
  \label{lem:asymptotic-norm}
  In the situation described above, we have
  \[
  N_t^{\cUb}\Big(x\oplus\tsum_{i=1}^ja_ie_i\Big) \leq
  N_t^{\cUb}\Big(x\oplus\tsum_{i=1}^ka_ie_i\Big)
  \]
  for all $0\leq j\leq k$, $x\in X$ and $(a_i)_{i=1}^k\in\br^k$. It
  follows that
  \[
  N_t^{\cUb}\Big(0\oplus\tsum_{i=1}^ka_ie_i\Big) \leq
  2 N_t^{\cUb}\Big(x\oplus\tsum_{i=1}^ka_ie_i\Big)
  \]
  for all $x\in X$ and $(a_i)_{i=1}^k\in\br^k$.
\end{lem}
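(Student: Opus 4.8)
The plan is to prove the first inequality by showing it suffices to handle the single step $j = k-1$, and then to iterate. So fix $x \in X$ and $(a_i)_{i=1}^k \in \br^k$, and consider the iterated limit defining $N_t^{\cUb}\big(x \oplus \sum_{i=1}^k a_i e_i\big)$. The key observation is that the innermost limit is $\wlim_{n_k,\cU_k} t(n_1,\dots,n_{k-1},n_k)=0$, since $t$ is $\cUb$-weakly null. Write $u = x + \sum_{i=1}^{k-1} a_i t(n_1,\dots,n_i)$, which does not depend on $n_k$. Then the innermost limit is $\lim_{n_k,\cU_k}\norm{u + a_k t(n_1,\dots,n_{k-1},n_k)}$, and I want to show this is $\geq \norm{u}$. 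This follows from weak lower semicontinuity of the norm: the sequence $\big(u + a_k t(n_1,\dots,n_{k-1},n_k)\big)_{n_k}$ converges weakly to $u$ along $\cU_k$ (the weakly null term times the scalar $a_k$ vanishes weakly), so $\norm{u} \leq \lim_{n_k,\cU_k}\norm{u + a_k t(n_1,\dots,n_{k-1},n_k)}$. Peeling off this innermost limit therefore yields
\[
N_t^{\cUb}\Big(x\oplus\tsum_{i=1}^{k-1}a_ie_i\Big) =
\lim_{n_1,\cU_1}\dots\lim_{n_{k-1},\cU_{k-1}}\Bignorm{x+\tsum_{i=1}^{k-1}a_it(n_1,\dots,n_i)}
\leq N_t^{\cUb}\Big(x\oplus\tsum_{i=1}^{k}a_ie_i\Big)\ ,
\]
using monotonicity of iterated ultralimits (if $\varf \leq \psi$ pointwise then $\lim_\cU \varf \leq \lim_\cU \psi$). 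Iterating this, stripping off one coordinate at a time down from $k$ to $j$, gives the first displayed inequality for all $0 \leq j \leq k$.

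For the second inequality, I apply the first inequality with $j = 0$, twice. On the one hand, $N_t^{\cUb}(0 \oplus 0) = 0 \leq N_t^{\cUb}\big(0 \oplus \sum a_i e_i\big)$ is not quite what I want; instead, note that taking $j=0$ in the first inequality gives $\norm{x} = N_t^{\cUb}(x \oplus 0) \leq N_t^{\cUb}\big(x \oplus \sum_{i=1}^k a_i e_i\big)$. Now by the triangle inequality for the seminorm $N_t^{\cUb}$,
\[
N_t^{\cUb}\Big(0\oplus\tsum_{i=1}^ka_ie_i\Big) \leq
N_t^{\cUb}\Big(x\oplus\tsum_{i=1}^ka_ie_i\Big) + N_t^{\cUb}\big(x \oplus 0\big)
= N_t^{\cUb}\Big(x\oplus\tsum_{i=1}^ka_ie_i\Big) + \norm{x}\ ,
\]
and since $\norm{x} \leq N_t^{\cUb}\big(x \oplus \sum_{i=1}^k a_i e_i\big)$ by the case just noted, the right-hand side is at most $2\,N_t^{\cUb}\big(x \oplus \sum_{i=1}^k a_i e_i\big)$, as claimed.

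The only genuinely delicate point is the innermost-limit estimate $\norm{u} \leq \lim_{n_k,\cU_k}\norm{u + a_k t(n_1,\dots,n_{k-1},n_k)}$, i.e.\ making sure weak lower semicontinuity of the norm is applied correctly along an ultrafilter: one should observe that for every functional $\varf \in X^*$ one has $\varf(u) = \lim_{n_k,\cU_k}\varf\big(u + a_k t(n_1,\dots,n_{k-1},n_k)\big)$ because $\wlim_{n_k,\cU_k} t(n_1,\dots,n_{k-1},n_k) = 0$, and then $\abs{\varf(u)} \leq \norm{\varf}\,\lim_{n_k,\cU_k}\norm{u + a_k t(\dots,n_k)}$, so taking the supremum over $\varf$ in the unit ball of $X^*$ gives the bound. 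Everything else — the monotonicity of ultralimits and the triangle inequality for seminorms — is routine. Note that the hypothesis that $t$ is $\cUb$-weakly null is used exactly once per coordinate peeled off, in precisely this step; no normalization of $t$ is actually needed for this lemma.
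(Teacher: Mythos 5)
Your proof is correct and follows essentially the same route as the paper: weak lower semicontinuity of the norm (which is the Hahn--Banach norming-functional argument you unpack at the end) together with $\cUb$-weak nullity of the tree, and then the triangle inequality combined with the $j=0$ case for the second estimate, exactly as in the paper. The only cosmetic difference is that you peel off one coordinate at a time and iterate, whereas the paper fixes a single norming functional $z^*(\nb)$ for $x+\sum_{i=1}^{j}a_i t(n_1,\dots,n_i)$ at level $j$ and then observes that applying this one fixed functional to the level-$k$ vector recovers the level-$j$ norm in the iterated ultralimit (the extra terms vanish by weak nullity), peeling off all coordinates $j+1,\dots,k$ in a single step.
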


\begin{proof}
  The second inequality follows from the first one by
  triangle inequality. To see the first inequality, fix $x\in X$,
  $(a_i)_{i=1}^k\in\br^k$ and $0\leq j\leq k$. For every
  $\nb\in\sizeeq{\bn}{j}$, fix a norming functional
  $z^*(\nb)$ for $x+\sum_{i=1}^ja_it(n_1,\dots,n_i)$. It follows that
  \begin{align*}
    N_t^{\cUb}\Big(x\oplus\tsum_{i=1}^ja_ie_i\Big) &=
    \lim_{n_1,\cU_1}\dots\lim_{n_j,\cU_j}
    \Bignorm{x+\tsum_{i=1}^ja_it(n_1,\dots,n_i)} \\
    &= \lim_{n_1,\cU_1}\dots\lim_{n_j,\cU_j} z^*(n_1,\dots,n_j)
    \Big(x+\tsum_{i=1}^ja_it(n_1,\dots,n_i)\Big) \\
    &= \lim_{n_1,\cU_1}\dots\lim_{n_k,\cU_k} z^*(n_1,\dots,n_j)
    \Big(x+\tsum_{i=1}^ka_it(n_1,\dots,n_i)\Big) \\
    &\leq \lim_{n_1,\cU_1}\dots\lim_{n_k,\cU_k}
    \Bignorm{x+\tsum_{i=1}^ka_it(n_1,\dots,n_i)} \\
    &= N_t^{\cUb}\Big(x\oplus\tsum_{i=1}^ka_ie_i\Big)\ .
  \end{align*}
\end{proof}

\begin{rem}
  Lemma~\ref{lem:asymptotic-norm} shows that $N_t^{\cUb}$ is in fact a
  norm on $X\oplus\br^k$ extending the norm of $X$, and the unit
  vector basis of $\br^k$ is a normalized, monotone basis with respect
  to $N_t^{\cUb}$. By an earlier remark, the restriction of
  $N_t^{\cUb}$ to $\br^k$ is in the $k^{\text{th}}$ asymptotic
  structure of $X$. It is not very hard to show that if $X$ is
  separable and reflexive, then the converse also holds: every member
  of $\{X\}_k$ is of the form $N_t^{\cUb}$ for some normalized, weakly
  null tree $t$ in $X$ with $\cUb$ being arbitrary.
\end{rem}

Our next result approximates bounded functions, like those appearing
in Raynaud's characterization of stability, by weakly null trees.

\begin{lem}
  \label{lem:bdd-fn-as-tree}
  Assume $X$ is reflexive. Let $k\in\bn$, $\cUb=(\cU_1,\dots,\cU_k)$
  be a sequence of free ultrafilters and $f\colon\sizeeq{\bn}{k}\to X$
  a bounded function. Then there exist a normalized, $\cUb$-weakly null
  rooted tree $t$ in $X$ and non-negative scalars $a_0,a_1,\dots,a_k$
  such that
  \[
  \lim_{\nb,\cUb}\Bignorm{f(n_1,\dots,n_k)-\tsum_{i=0}^ka_it(n_1,\dots,n_i)}=0\ .
  \]
\end{lem}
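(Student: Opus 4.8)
The plan is to build the tree $t$ layer by layer, peeling off one weakly null tree at each level by iteratively extracting weak limits along the ultrafilters. First I would set $x^{(k)}(n_1,\dots,n_k) = f(n_1,\dots,n_k)$ and, using reflexivity (so that bounded sets are weakly compact), define
\[
  y^{(k)}(n_1,\dots,n_{k-1}) = \wlim_{n_k,\cU_k} x^{(k)}(n_1,\dots,n_{k-1},n_k)\ ,
\]
and then set $u^{(k)}(n_1,\dots,n_k) = x^{(k)}(n_1,\dots,n_k) - y^{(k)}(n_1,\dots,n_{k-1})$, so that $u^{(k)}$ is, for fixed $n_1,\dots,n_{k-1}$, a weakly null net in $n_k$ along $\cU_k$. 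Repeating this on $y^{(k)}$ (now a bounded function of $k-1$ variables) produces $y^{(k-1)}$ depending on $k-2$ variables together with a weakly null difference $u^{(k-1)}$, and so on down to a constant $y^{(0)}\in X$. This gives a decomposition $f(n_1,\dots,n_k) = \sum_{i=0}^k u^{(i)}(n_1,\dots,n_i)$ (with $u^{(0)}=y^{(0)}$ the root), where each $u^{(i)}$, read as a function of its last coordinate with the earlier ones fixed, is weakly null along $\cU_i$; this is exactly the $\cUb$-weakly null condition for the unnormalized rooted tree $(u^{(i)})$.

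Next I would normalize. Put $a_i(n_1,\dots,n_i) = \norm{u^{(i)}(n_1,\dots,n_i)}$ and define $t(n_1,\dots,n_i) = u^{(i)}(n_1,\dots,n_i)/a_i(n_1,\dots,n_i)$ when the norm is positive (and pick an arbitrary unit vector otherwise, taking the coefficient $0$ there). Normalization does not destroy the weakly null property. The coefficients $a_i$ still depend on the coordinates, but they are bounded sequences of reals, so by passing to iterated $\cU_i$-limits I can replace $a_i(n_1,\dots,n_i)$ by the constant $a_i = \lim_{n_1,\cU_1}\dots\lim_{n_i,\cU_i} a_i(n_1,\dots,n_i)\geq 0$. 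The point is that in the iterated limit $\lim_{\nb,\cUb}$ the error $\bigl(a_i(n_1,\dots,n_i) - a_i\bigr) t(n_1,\dots,n_i)$ tends to $0$ in norm for each $i$, since $\norm{t}\le 1$ and $a_i(n_1,\dots,n_i)\to a_i$ along the ultrafilters; summing the finitely many such errors over $i=0,\dots,k$ gives the displayed conclusion with the constant scalars $a_0,\dots,a_k$.

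The main obstacle I expect is bookkeeping the ultrafilter iterated limits correctly: at the normalization-of-coefficients step I need that for a bounded scalar function $b(n_1,\dots,n_i)$ and a unit-vector-valued function $v(n_1,\dots,n_i)$ one has $\lim_{\nb,\cUb}\norm{\bigl(b(n_1,\dots,n_i)-b\bigr)v(n_1,\dots,n_i)} = 0$ where $b$ is the iterated limit; this is true because $\lim_{n_1,\cU_1}\dots\lim_{n_i,\cU_i} b(n_1,\dots,n_i) = b$ and the later iterated limits $\lim_{n_{i+1},\cU_{i+1}}\dots\lim_{n_k,\cU_k}$ act trivially on a quantity not depending on those coordinates. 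One must be slightly careful that forming the weak limit $y^{(j)}$ at level $j$ uses only the ultrafilter $\cU_j$ and is performed before touching $\cU_{j-1},\dots,\cU_1$, so that the resulting function genuinely depends only on $n_1,\dots,n_{j-1}$; arranging the extractions in the order $k, k-1, \dots, 1$ handles this cleanly. Reflexivity enters exactly once, to guarantee that each bounded net has a weak limit along the relevant ultrafilter, and nothing deeper about the geometry of $X$ is needed for this lemma.
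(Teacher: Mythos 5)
Your proposed decomposition is exactly the one used in the paper: your iterated peeling $x^{(k)}\mapsto y^{(k)}\mapsto y^{(k-1)}\mapsto\cdots$ produces precisely the successive weak limits $\partial^j f$ of the paper's proof, and the resulting telescoping sum $f=\sum_{i=0}^k u^{(i)}$ is the paper's identity~\eqref{eq:bdd-fn-to-tree}. The bookkeeping of ultrafilter limits is also handled the same way. So the overall strategy is right.

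There is, however, one genuine gap in the normalization step: the assertion that \emph{``Normalization does not destroy the weakly null property''} is false as stated. Writing $t=u^{(i)}/\norm{u^{(i)}}$, one has
\[
  \wlim_{n_i,\cU_i} t(n_1,\dots,n_i)
  = \wlim_{n_i,\cU_i}\ \frac{u^{(i)}(n_1,\dots,n_i)}{\norm{u^{(i)}(n_1,\dots,n_i)}}\ ,
\]
and if the norms tend to $0$ along $\cU_i$, dividing by them can destroy weak nullity. Concretely, if $u^{(i)}(n)=e_1/n$ (a fixed unit vector scaled to $0$), then $u^{(i)}$ is weakly (indeed norm) null, yet $t(n)=e_1$ for all $n$, which is not weakly null. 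In your construction this happens exactly when the constant you extract, $a_i=\lim_{n_1,\cU_1}\dots\lim_{n_i,\cU_i}\norm{u^{(i)}(n_1,\dots,n_i)}$, equals $0$. The paper fixes this with a case split that your sketch does not mention: compute $a_i$ \emph{first}; if $a_i=0$, discard $u^{(i)}$ altogether and take $t(n_1,\dots,n_{i-1},\cdot)$ to be an arbitrary normalized weakly null sequence (harmless since the coefficient is $0$); if $a_i>0$, note that $\V_{\cU_1}n_1\dots\V_{\cU_i}n_i$ one has $\norm{u^{(i)}(n_1,\dots,n_i)}>a_i/2$, so pointwise normalization does preserve the $\cUb$-weakly null property at that level. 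With this correction, your argument matches the paper's.
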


\begin{proof}
  Define for each $1\leq j\leq k$ the function
  \begin{eqnarray*}
    \partial^j f\colon\sizeeq{\bn}{k-j} &\to& X\\
    \partial^j f(n_1,\dots,n_{k-j}) &=&
    \wlim_{n_{k-j+1},\cU_{k-j+1}}\dots\wlim_{n_k,\cU_k} f(n_1,\dots,n_k)\ .
  \end{eqnarray*}
  We will identify the function $\partial^kf$ on
  $\sizeeq{\bn}{0}=\{\emptyset\}$ with its image in $X$. Note that for
  each $\nb\in\sizeeq{\bn}{k}$ we have
  \begin{equation}
    \label{eq:bdd-fn-to-tree}
    f(n_1,\dots,n_k)=\partial^kf+\sum_{i=1}^k
    \big(\partial^{k-i}f(n_1,\dots,n_i)-\partial^{k-i+1}f(n_1,\dots,n_{i-1})\big)\ .
  \end{equation}
  Define an unrooted tree $a$ in $\br$ of height $k$ by setting
  \[
  a(n_1,\dots,n_i)=
  \bignorm{\partial^{k-i}f(n_1,\dots,n_i)-\partial^{k-i+1}f(n_1,\dots,n_{i-1})}
  \]
  for $1\leq i\leq k$ and $n_1<\dots<n_i$ in $\bn$. We next define
  $t(\emptyset)\in S_X$ and non-negative scalars $a_0,a_1,\dots,a_k$
  by
  \begin{align*}
    \partial^kf &= a_0t(\emptyset)\\
    a_i &= \lim_{n_1,\cU_1}\dots\lim_{n_i,\cU_i} a(n_1,\dots,n_i)
    \qquad\text{for }1\leq i\leq k\ .
  \end{align*}
  We now define our rooted tree $t$ in $S_X$ as follows. We have
  already defined $t(\emptyset)$. Fix $1\leq i\leq k$. If $a_i=0$,
  then for any $n_1<\dots<n_{i-1}$ in $\bn$, we choose
  $\big(t(n_1,\dots,n_{i-1},n)\big)_{n=n_{i-1}+1}^\infty$ to be an
  arbitrary normalized weakly null sequence. If $a_i\neq0$, then for
  $n_1<\dots<n_i$ we set
  \[
  t(n_1,\dots,n_i)=%
  \begin{cases}
    \frac{\partial^{k-i}f(n_1,\dots,n_i)-\partial^{k-i+1}f(n_1,\dots,n_{i-1})}{a(n_1,\dots,n_i)}
    & \text{if }a(n_1,\dots,n_i)\neq0\ ,\\
    \text{an arbitrary norm-$1$ vector} & \text{otherwise.}
  \end{cases}
  \]
  If $a_i\neq0$, then $\V_{\cU_1}n_1\dots\V_{\cU_i}n_i$
  $a(n_1,\dots,n_i)>a_i/2$, from which it follows that $t$ is
  $\cUb$-weakly null. Now observe that for $n_1<\dots<n_k$ we have
  \begin{multline*}
    f(n_1,\dots,n_k)-\sum_{i=0}^ka_it(n_1,\dots,n_i)\\
    =\sum_{i=1}^k \Big[
      \big(\partial^{k-i}f(n_1,\dots,n_i)-\partial^{k-i+1}f(n_1,\dots,n_{i-1})\big)
      - a(n_1,\dots,n_i)t(n_1,\dots,n_i) \Big]\\
    +\sum_{i=1}^k\big(a(n_1,\dots,n_i)-a_i\big)t(n_1,\dots,n_i)\ ,
  \end{multline*}
  where we used~\eqref{eq:bdd-fn-to-tree}. It follows immediately that
  \[
  \lim_{\nb,\cUb}\Bignorm{f(n_1,\dots,n_k)-\tsum_{i=0}^ka_it(n_1,\dots,n_i)}=0\ ,
  \]
  as required.
\end{proof}

An important tool in showing that reflexive spaces are cut stable will
be the following operation on trees.

\begin{defn}
  Let $k,l,m\in\bn$ with $k=l+m$. Let $s$ and $t$ be trees in $X$
  of height $l$ and $m$, respectively. For a cut $P$ of
  $\{1,\dots,k\}$ of size $l$, the \emph{$P$-intertwining of $s$ and
  $t$ }is the unrooted tree $s\itws_P t$ in $X$ of height $k$ defined by
  \[
  (s\itws_P t)(n_1,\dots,n_j)=%
  \begin{cases}
    s(n_i:\,i\in P,\ i\leq j) & \text{if }j\in P\ ,\\
    t(n_i:\,i\in P^c,\ i\leq j) & \text{if }j\in P^c
  \end{cases}
  \]
  for $1\leq j\leq k$ and $n_1<\dots<n_j$ in $\bn$.
\end{defn}

Note that if $P=\{p_1,\dots,p_l\}$ with $p_1<\dots<p_l$ and
$P^c=\{q_1,\dots,q_m\}$ with $q_1<\dots<q_m$, then
\begin{align*}
  (s\itws_P t)(n_1,n_2,\dots,n_{p_i}) &= s(n_{p_1},\dots,n_{p_i}) &&
  \text{for }1\leq i\leq l\ ,\\
  (s\itws_P t)(n_1,n_2,\dots,n_{q_j}) &= t(n_{q_1},\dots,n_{q_j}) &&
  \text{for }1\leq j\leq m\ .
\end{align*}
Given a sequence $\cUb=(\cU_1,\dots,\cU_k)$ of free ultrafilters, let
$\cUb_P=(\cU_{p_1},\dots,\cU_{p_l})$ and
$\cUb_{P^c}=(\cU_{q_1},\dots,\cU_{q_m})$. Note that if $s$ is
$\cUb_P$-weakly null and $t$ is $\cUb_{P^c}$-weakly null, then
$s\itws_Pt$ is $\cUb$-weakly null.

Although the standard basis of $\br^k$ is not in general unconditional
with respect to a norm in the $k^{\text{th}}$ asymptotic structure of
a Banach space, we have the following result.

\begin{prop}
  \label{prop:intertwining-and-unconditionality}
  Assume $X$ is reflexive. Let $k,l,m\in\bn$ with $k=l+m$ and $P$ be a
  cut of $\{1,\dots,k\}$ of size $l$. Let $\cUb=(\cU_1,\dots,\cU_k)$
  be a sequence of free ultrafilters. Let $s$ and $t$ be normalized
  trees in $X$ of height $l$ and $m$, respectively, such that
  $s$ is $\cUb_P$-weakly null and $t$ is $\cUb_{P^c}$-weakly
  null. Then
  \begin{equation}
    \label{eq:project-P}
    N^{\cUb}_{s\itws_P t}\Big(\tsum_{i\in P}a_ie_i\Big) \leq
    N^{\cUb}_{s\itws_P t}\Big(\tsum_{i=1}^ka_ie_i\Big)
  \end{equation}
  and
  \begin{equation}
    \label{eq:project-PC}
    N^{\cUb}_{s\itws_P t}\Big(\tsum_{i\in P^c}a_ie_i\Big) \leq
    N^{\cUb}_{s\itws_P t}\Big(\tsum_{i=1}^ka_ie_i\Big) 
  \end{equation}
  for all $(a_i)_{i=1}^k\in\br^k$.
\end{prop}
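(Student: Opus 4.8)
The plan is to prove \eqref{eq:project-P} directly; \eqref{eq:project-PC} follows by the same argument with the roles of $(s,P)$ and $(t,P^c)$ interchanged. Write $P=\{p_1<\dots<p_l\}$ and $P^c=\{q_1<\dots<q_m\}$, put $u=s\itws_P t$, and fix $(a_i)_{i=1}^k\in\br^k$. For $n_1<\dots<n_k$ in $\bn$ I would split a branch sum into its two halves,
\[
\begin{aligned}
X_s(\nb) &=\sum_{i\in P}a_i u(n_1,\dots,n_i)=\sum_{r=1}^l a_{p_r}\,s(n_{p_1},\dots,n_{p_r}),\\
X_t(\nb) &=\sum_{i\in P^c}a_i u(n_1,\dots,n_i)=\sum_{c=1}^m a_{q_c}\,t(n_{q_1},\dots,n_{q_c}),
\end{aligned}
\]
so that $\sum_{i=1}^k a_i u(n_1,\dots,n_i)=X_s(\nb)+X_t(\nb)$, while $X_s$ depends only on the coordinates indexed by $P$ and $X_t$ only on those indexed by $P^c$. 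Unwinding the definition of $N^{\cUb}_u$, the left-hand side of \eqref{eq:project-P} equals $\lim_{\nb,\cUb}\norm{X_s(\nb)}$ and the right-hand side equals $\lim_{\nb,\cUb}\norm{X_s(\nb)+X_t(\nb)}$.

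Next I would bring in norming functionals, exactly as in the proof of Lemma~\ref{lem:asymptotic-norm}: for each $\mb=(m_1<\dots<m_l)$ pick, via Hahn--Banach, some $z^*(\mb)\in B_{X^*}$ norming $\sum_{r=1}^l a_{p_r}s(m_1,\dots,m_r)$, so that $\norm{X_s(\nb)}=z^*(n_{p_1},\dots,n_{p_l})\big(X_s(\nb)\big)$ for every $\nb$. The crux of the proof is the claim that $\lim_{\nb,\cUb}z^*(n_{p_1},\dots,n_{p_l})\big(X_t(\nb)\big)=0$. Granting this, and using $\norm{z^*(\,\cdot\,)}\leq1$, inequality \eqref{eq:project-P} falls out:
\[
\begin{aligned}
N^{\cUb}_u\Big(\sum_{i\in P}a_ie_i\Big)
&=\lim_{\nb,\cUb}z^*(n_{p_1},\dots,n_{p_l})\big(X_s(\nb)\big)
=\lim_{\nb,\cUb}z^*(n_{p_1},\dots,n_{p_l})\big(X_s(\nb)+X_t(\nb)\big)\\
&\leq\lim_{\nb,\cUb}\norm{X_s(\nb)+X_t(\nb)}=N^{\cUb}_u\Big(\sum_{i=1}^k a_ie_i\Big).
\end{aligned}
\]

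To prove the claim, I would expand $X_t$ and reduce to showing $\lim_{\nb,\cUb}z^*(n_{p_1},\dots,n_{p_l})\big(t(n_{q_1},\dots,n_{q_c})\big)=0$ for each fixed $c$ with $1\leq c\leq m$. The key observation is that $t(n_{q_1},\dots,n_{q_c})$ depends only on the coordinates $q_1,\dots,q_c$, all $\leq q_c$, whereas $z^*$ depends only on the coordinates indexed by $P$, none of which equals $q_c$. I would then evaluate the iterated limit $\lim_{n_1,\cU_1}\dots\lim_{n_k,\cU_k}$ from the innermost coordinate outwards. A limit over a coordinate $i>q_c$ with $i\in P^c$ does nothing, since the expression does not depend on $n_i$; a limit over a coordinate $i>q_c$ with $i\in P$ replaces the current avatar of $z^*$ by its weak-$*$ ultralimit along $\cU_i$, which again lies in $B_{X^*}$ by Banach--Alaoglu, the vector $t(n_{q_1},\dots,n_{q_c})$ being carried through unchanged. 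Once every coordinate larger than $q_c$ has been processed, the expression has become $\psi\big(t(n_{q_1},\dots,n_{q_c})\big)$ for some $\psi\in B_{X^*}$ obtained from $z^*$ by iterated weak-$*$ ultralimits over $\{i\in P:\ i>q_c\}$; in particular $\psi$ depends only on the coordinates indexed by $\{i\in P:\ i\leq q_c\}$ and, crucially, not on $n_{q_c}$. Since $t$ is $\cUb_{P^c}$-weakly null, for $\cU_{q_1}$-almost every $n_{q_1}$, \dots, $\cU_{q_{c-1}}$-almost every $n_{q_{c-1}}$ one has $\wlim_{n_{q_c},\cU_{q_c}}t(n_{q_1},\dots,n_{q_{c-1}},n_{q_c})=0$; for such choices $\lim_{n_{q_c},\cU_{q_c}}\psi\big(t(n_{q_1},\dots,n_{q_c})\big)=\psi(0)=0$, and the remaining outer limits over $n_1,\dots,n_{q_c-1}$ then yield $0$.

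The step I expect to be the main obstacle is the bookkeeping in the previous paragraph: making precise that, at the moment the limit over $n_{q_c}$ is taken, every coordinate exceeding $q_c$ has already been integrated out, and that the surviving functional $\psi$ genuinely no longer involves $n_{q_c}$. This is exactly the place where one uses that $q_c$ is the top coordinate on which $t(n_{q_1},\dots,n_{q_c})$ depends and that $q_c\notin P$ (so $n_{q_c}$ never entered $z^*$ in the first place), which together allow the level-$c$ weak-nullity of $t$ to be applied against the now $n_{q_c}$-independent functional $\psi$. An entirely analogous argument, using $t$-norming functionals and the $\cUb_P$-weak-nullity of $s$, gives \eqref{eq:project-PC}.
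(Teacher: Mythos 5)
Your argument is correct, and it reaches the conclusion by a genuinely different route from the paper's. Writing $u=s\itws_Pt$, the paper establishes the one-step elimination~\eqref{eq:one-step-projection} of the coefficient $a_{q_m}$ and then iterates. The key tool there is Mazur's theorem: having fixed $n_1,\dots,n_{q_m-1}$, a convex combination of $u(n_1,\dots,n_{q_m-1},n)$ over $n$ in a suitable set $L\in\cU_{q_m}$ has small norm, and averaging the full branch sum over this convex combination perturbs only the $i=q_m$ term, because for $i>q_m$ the vector $u(n_1,\dots,n_i)$ does not depend on $n_{q_m}$. You instead pick Hahn--Banach functionals $z^*(n_{p_1},\dots,n_{p_l})$ norming $\sum_{i\in P}a_iu(n_1,\dots,n_i)$, exactly as in Lemma~\ref{lem:asymptotic-norm}, and show in one stroke that these functionals annihilate the entire $P^c$-part of the branch sum in the iterated ultralimit. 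Your weak-$*$ ultralimit bookkeeping --- stripping away the $P$-coordinates above $q_c$ before invoking the level-$c$ weak nullity of $t$ --- plays exactly the role of the paper's observation that $u(n_1,\dots,n_i)$ is independent of $n_{q_m}$ for $i>q_m$: both arguments live off the same structural feature of the intertwined tree, but package it differently (Hahn--Banach plus Banach--Alaoglu on the dual side versus Mazur on the primal side). Your version dispenses with the induction on the number of removed coefficients and makes the kinship with Lemma~\ref{lem:asymptotic-norm} transparent; the paper's Mazur argument is more hands-on, producing an explicit finite convex average of branch vectors with small norm, in the spirit of the rest of their estimates.
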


\begin{proof}
  Write $P$ as $P=\{p_1,\dots,p_l\}$ with $p_1<\dots<p_l$ and $P^c$ as
  $P^c=\{q_1,\dots,q_m\}$ with $q_1<\dots<q_m$. Put $u=s\itws_Pt$. We
  claim that
  \begin{equation}
    \label{eq:one-step-projection}
    N^{\cUb}_u \Big(\tsum_{i=1}^la_{p_i}e_{p_i}
    +\tsum_{j=1}^{m-1}a_{q_j}e_{q_j}\Big) \leq N^{\cUb}_u
    \Big(\tsum_{i=1}^ka_ie_i\Big)\ .
  \end{equation}
  for all $(a_i)_{i=1}^k\in\br^k$. Using the proof of this claim, we
  can repeatedly eliminate coefficients to
  deduce~\eqref{eq:project-P}. The proof of~\eqref{eq:project-PC} is
  similar.

  To see the claim, fix $\vare>0$, $n_1<\dots<n_{q_m-1}$ in $\bn$ and
  $L\in\cU_{q_m}$ such that
  \[
  0\in\overline{\{u(n_1,n_2,\dots,n_{q_m}):\,n_{q_m}\in L\}}^w
  \]
  and for all $n_{q_m}\in L$ and for all $(a_i)_{i=1}^k\in[-1,1]^k$,
  we have
  \begin{multline*}
    \biggabs{N^{\cUb}_u\Big(\tsum_{i=1}^ka_ie_i\Big) -\\
      \lim_{n_{q_m+1},\cU_{q_m+1}}\dots\lim_{n_k,\cU_k}
      \Bignorm{\sum_{i=1}^ka_i u(n_1,\dots,n_i)}} <\vare\ .
  \end{multline*}
  Note that $\V_{\cU_1}n_1\dots\V_{\cU_{q_m-1}}n_{q_m-1}$ such
  $L\in\cU_{q_m}$ exists. By Mazur's theorem there is a finite set
  $F\subset L$ and a convex combination
  \[
  \sum_{n\in F}\tau_n u(n_1,n_2,\dots,n_{q_m-1},n)
  \]
  with norm
  \[
  \Bignorm{\tsum_{n\in
      F}\tau_n u(n_1,n_2,\dots,n_{q_m-1},n)}<\vare\ .
  \]
  Since $F$ is finite, there exist $\max F<n_{q_m+1}<\dots<n_k$ such
  that
  \[
  \biggabs{N^{\cUb}_u\Big(\tsum_{i=1}^ka_ie_i\Big) -\\
    \Bignorm{\sum_{i=1}^ka_i u(n_1,\dots,n_i)}} <2\vare
  \]
  for all $n_{q_m}\in F$ and for all $(a_i)_{i=1}^k\in[-1,1]^k$. Now
  fix $(a_i)_{i=1}^k\in[-1,1]^k$ and observe that
  \begin{align*}
    N^{\cUb}_u \Big(\tsum_{i=1}^la_{p_i}e_{p_i}
    &+\tsum_{j=1}^{m-1}a_{q_j}e_{q_j}\Big)\\
    &\leq
    \Bignorm{\tsum_{i=1}^{q_m-1}a_iu(n_1,\dots,n_i)+\tsum_{i=q_m+1}^ka_iu(n_1,\dots,n_i)} 
    +2\vare\\
    &\leq \Bignorm{\tsum_{n_{q_m}\in
        F}\tau_{n_{q_m}}\tsum_{i=1}^ka_iu(n_1,\dots,n_i)}+3\vare\\
    &\leq N^{\cUb}_u \Big(\tsum_{i=1}^ka_ie_i\Big)+5\vare\ .
  \end{align*}
  Here it is crucial that for $i>q_m$, $u(n_1,\dots,n_i)$ does not
  depend on $n_{q_m}$.
  This completes the proof of the claim.
\end{proof}

We are now ready to prove the main result of this section.

\begin{thm}
  \label{thm:reflexive-cut-stable}
  Every reflexive Banach space is $5$-cut stable, and thus, in
  particular, upper stable.
\end{thm}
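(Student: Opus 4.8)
The plan is to verify condition~(i) of Proposition~\ref{prop:cut-stable-characterization} with $C=5$. Fix $k,l,m\in\bn$ with $k=l+m$, a free ultrafilter $\cU$, bounded functions $f\colon\sizeeq{\bn}{l}\to X$ and $g\colon\sizeeq{\bn}{m}\to X$, and cuts $P,Q$ of $\{1,\dots,k\}$ of size~$l$. Throughout, $\cUb$ denotes the constant sequence $(\cU,\dots,\cU)$ of whatever length is needed and $\lim_{\nb,\cUb}$ the associated iterated limit. Setting
\[
A=\lim_{\nb,\cUb}\bignorm{f(n_i:i\in P)-g(n_i:i\in P^c)}\ ,\qquad
B=\lim_{\nb,\cUb}\bignorm{f(n_i:i\in Q)-g(n_i:i\in Q^c)}\ ,
\]
it suffices to show $A\leq 5B$.

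First I would replace $f$ and $g$ by weakly null trees. Apply Lemma~\ref{lem:bdd-fn-as-tree} to $f$ and to $g$ to obtain a normalized $\cUb$-weakly null rooted tree $s$ of height~$l$ with non-negative scalars $a_0,\dots,a_l$, and a normalized $\cUb$-weakly null rooted tree $t$ of height~$m$ with non-negative scalars $b_0,\dots,b_m$; put $x=a_0s(\emptyset)-b_0t(\emptyset)\in X$. For each size-$l$ cut $R$ of $\{1,\dots,k\}$ let $u_R=s\itws_R t$; this is $\cUb$-weakly null, so $N^{\cUb}_{u_R}$ is a norm on $X\oplus\br^k$ (Lemma~\ref{lem:asymptotic-norm} and the following remark). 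Let $c^R\in\br^k$ carry the entry $a_i$ in the $i^{\text{th}}$ slot of $R$ and $-b_j$ in the $j^{\text{th}}$ slot of $R^c$, and write $c^R=c^R_+-c^R_-$ for the decomposition into the (non-negative) part supported on $R$ and the part supported on $R^c$. The key identity, valid for every such cut $R$, is
\[
\lim_{\nb,\cUb}\bignorm{f(n_i:i\in R)-g(n_i:i\in R^c)}=N^{\cUb}_{u_R}\Big(x\oplus\tsum_{p=1}^k c^R_p e_p\Big)\ .
\]
One proves it by substituting the tree approximations of $f$ and $g$ into the left side and using the elementary fact that the iterated $\cU$-limit of a quantity depending only on the coordinates $(n_i:i\in R)$ equals the iterated $\cU$-limit over just those coordinates in increasing order, which after relabelling is exactly the limit defining $N^{\cUb}_s$ (resp.\ $N^{\cUb}_t$). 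The same reduction shows that $N^{\cUb}_{u_R}(0\oplus c^R_+)=N^{\cUb}_s\big(0\oplus\sum_{i=1}^la_ie_i\big)=:\tau_s$ and $N^{\cUb}_{u_R}(0\oplus c^R_-)=N^{\cUb}_t\big(0\oplus\sum_{j=1}^mb_je_j\big)=:\tau_t$ are independent of the cut~$R$.

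Granting this, the estimate is immediate. Applying the identity with $R=Q$: since $N^{\cUb}_{u_Q}$ is a norm, Proposition~\ref{prop:intertwining-and-unconditionality} (projecting onto the $Q$- and the $Q^c$-coordinates) gives $\tau_s\leq N^{\cUb}_{u_Q}(0\oplus c^Q)$ and $\tau_t\leq N^{\cUb}_{u_Q}(0\oplus c^Q)$, and Lemma~\ref{lem:asymptotic-norm} gives $N^{\cUb}_{u_Q}(0\oplus c^Q)\leq 2N^{\cUb}_{u_Q}(x\oplus c^Q)=2B$; thus $\tau_s,\tau_t\leq 2B$. Also $x$ is the iterated weak limit of $f(n_i:i\in Q)-g(n_i:i\in Q^c)$ (linearity of weak limits plus the same reduction, using reflexivity of $X$), whence $\|x\|\leq B$ by weak lower semicontinuity of the norm. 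Finally, applying the identity with $R=P$ and the triangle inequality for $N^{\cUb}_{u_P}$,
\[
A=N^{\cUb}_{u_P}\big(x\oplus c^P\big)\leq N^{\cUb}_{u_P}(x\oplus 0)+N^{\cUb}_{u_P}(0\oplus c^P_+)+N^{\cUb}_{u_P}(0\oplus c^P_-)=\|x\|+\tau_s+\tau_t\leq 5B\ .
\]
This gives $5$-cut stability, and hence upper stability, since a $C$-cut stable space is $C$-upper stable (take $P=\{1,\dots,l\}$).

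The one delicate point is the bookkeeping behind the displayed identity: tracking which coordinates $s$ and $t$ see through the intertwining $s\itws_R t$, checking that the iterated $\cU$-limits genuinely collapse onto the relevant sub-tuples, verifying that $s\itws_R t$ is normalized and $\cUb$-weakly null so that Lemma~\ref{lem:asymptotic-norm} applies, and confirming that $\tau_s$ and $\tau_t$ do not depend on the cut. Everything beyond that is just the triangle inequality together with the two inequalities supplied by Lemma~\ref{lem:asymptotic-norm} and Proposition~\ref{prop:intertwining-and-unconditionality}.
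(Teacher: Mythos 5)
Your proof is correct and follows essentially the same route as the paper: approximate $f$ and $g$ by weakly null trees via Lemma~\ref{lem:bdd-fn-as-tree}, express both sides of the cut-stability inequality as $N^{\cUb}_{s\itws_R t}(x\oplus c^R)$ for $R=P,Q$, and combine the triangle inequality on the $P$-side with Lemma~\ref{lem:asymptotic-norm} and Proposition~\ref{prop:intertwining-and-unconditionality} on the $Q$-side. The only cosmetic deviation is that you derive $\norm{x}\leq B$ by iterated weak lower semicontinuity, whereas the paper obtains the same estimate from the monotonicity in Lemma~\ref{lem:asymptotic-norm}, \ie $\norm{x}=N^{\cUb}_{u_Q}(x\oplus 0)\leq N^{\cUb}_{u_Q}(x\oplus c^Q)$.
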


\begin{proof}
  Fix $k,l,m\in\bn$ with $k=l+m$, a sequence
  $\cUb=(\cU_1,\dots,\cU_k)$ of free ultrafilters and cuts $P,Q$ of
  $\{1,\dots,k\}$ each of size $l$. Let $X$ be an
  infinite-dimensional, reflexive Banach space and
  $f\colon\sizeeq{\bn}{l}\to X$ and $g\colon\sizeeq{\bn}{m}\to X$ be
  bounded functions. We will show that
  \begin{multline}
    \label{eq:cut-stable-claim}
    \lim_{\nb,\cUb}\norm{f(n_i:\,i\in P)+g(n_i:\,i\in P^c)}\\
    \leq 5 \lim_{\nb,\cUb}\norm{f(n_i:\,i\in Q)+g(n_i:\,i\in Q^c)}
  \end{multline}
  in the special case when the ultrafilters $\cU_i$ are all the
  same. This will then complete the proof.

  Write $P=\{p_1\,\dots,p_l\}$ with $p_1<\dots<p_l$ and
  $P^c=\{q_1,\dots,q_m\}$ with $q_1<\dots<q_m$. By
  Lemma~\ref{lem:bdd-fn-as-tree} we may assume that there exist
  normalized, rooted trees $s$ and $t$ in $X$ of height $l$ and $m$,
  respectively, such that $s$ is $\cUb_P$-weakly null, $t$ is
  $\cUb_{P^c}$-weakly null, and there exist non-negative scalars
  $a_0,a_1,\dots,a_l$ and $b_0,b_1,\dots,b_m$ such that
  \[
  f(n_1,\dots,n_l)=a_0s(\emptyset)+\sum_{i=1}^la_is(n_1,\dots,n_i)
  \qquad\text{for }n_1<\dots<n_l
  \]
  and
  \[
  g(n_1,\dots,n_m)=b_0t(\emptyset)+\sum_{j=1}^mb_jt(n_1,\dots,n_j)
  \qquad\text{for }n_1<\dots<n_m\ .
  \]
  Let $u=s\itws_Pt$. Set $c_{p_i}=a_i$ for $1\leq i\leq l$ and
  $c_{q_j}=b_j$ for $1\leq j\leq m$. We shall frequently use the fact
  that
  \[
  N^{\cUb}_u\Big(\tsum_{i\in
    P}c_ie_i\Big)=N^{\cUb_P}_s\Big(\tsum_{i=1}^la_ie_i\Big)\ .
  \]
  Now observe that for $\nb\in\sizeeq{\bn}{k}$ we have
  \[ 
  f(n_i:\,i\in P)+g(n_i:\,i\in
  P^c)=a_0s(\emptyset)+b_0t(\emptyset)+\sum_{i=1}^k c_i
  u(n_1,\dots,n_i)\ .
  \]
  It follows that
  \begin{equation}
    \label{eq:sum-as-intertwined-limit}
    \lim_{\nb,\cUb}\norm{f(n_i:\,i\in P)+g(n_i:\,i\in P^c)}=N^{\cUb}_u
    \Big( \big(a_0s(\emptyset)+b_0t(\emptyset)\big)\oplus \tsum_{i=1}^k
    c_ie_i\Big)\ .
  \end{equation}
  By the triangle inequality, we have
  \begin{multline}
    \label{eq:upper-bound}
    N^{\cUb}_u \Big( \big(a_0s(\emptyset)+b_0t(\emptyset)\big)\oplus
    \tsum_{i=1}^k c_ie_i\Big)\\
    \leq \norm{a_0s(\emptyset)+b_0t(\emptyset)} +
    N^{\cUb_P}_s\Big(\tsum_{i=1}^la_ie_i\Big) + N^{\cUb_{P^c}}_t\Big(\tsum_{j=1}^mb_je_j\Big) 
  \end{multline}
  On the other hand, it follows from Lemma~\ref{lem:asymptotic-norm}
  and Proposition~\ref{prop:intertwining-and-unconditionality} that
  \begin{multline}
    \label{eq:lower-bound}
    \norm{a_0s(\emptyset)+b_0t(\emptyset)} +
    N^{\cUb_P}_s\Big(\tsum_{i=1}^la_ie_i\Big) +
    N^{\cUb_{P^c}}_t\Big(\tsum_{j=1}^mb_je_j\Big) \\
    \leq
    5\cdot N^{\cUb}_u
    \Big(\big(a_0s(\emptyset)+b_0t(\emptyset)\big)\oplus
    \tsum_{i=1}^k c_ie_i\Big)\ .
  \end{multline}
  When the ultrafilters $\cU_i$ are all identical, then
  \[
  N^{\cUb_P}_s\Big(\tsum_{i=1}^la_ie_i\Big) =
  N^{\cUb_Q}_s\Big(\tsum_{i=1}^la_ie_i\Big)
  \]
  and
  \[
  N^{\cUb_{P^c}}_t\Big(\tsum_{j=1}^mb_je_j\Big) =
  N^{\cUb_{Q^c}}_t\Big(\tsum_{j=1}^mb_je_j\Big)\ .
  \]
  Hence~\eqref{eq:cut-stable-claim} follows immediately
  from~\eqref{eq:sum-as-intertwined-limit},~\eqref{eq:upper-bound}
  and~\eqref{eq:lower-bound} applied to $u=s\itws_Pt$ or $u=s\itws_Qt$
  as appropriate.
\end{proof}

\begin{bibdiv}
\begin{biblist}

\bib{aldous:81}{article}{
  author={Aldous, D. J.},
  title={Subspaces of $L\sp {1}$, via random measures},
  journal={Trans. Amer. Math. Soc.},
  volume={267},
  date={1981},
  number={2},
  pages={445--463},
}

\bib{baudier:22}{article}{
  author={Baudier, F.},
  title={Barycentric gluing and geometry of stable metrics},
  journal={Rev. R. Acad. Cienc. Exactas F\'{\i }s. Nat. Ser. A Mat. RACSAM},
  volume={116},
  date={2022},
  number={1},
  pages={Paper No. 37, 48},
}

\bib{bcdkrsz:17}{article}{
  author={Baudier, F.},
  author={Causey, R.},
  author={Dilworth, S.},
  author={Kutzarova, D.},
  author={Randrianarivony, N. L.},
  author={Schlumprecht, Th.},
  author={Zhang, S.},
  title={On the geometry of the countably branching diamond graphs},
  journal={J. Funct. Anal.},
  volume={273},
  date={2017},
  number={10},
  pages={3150--3199},
}

\bib{bau-gar:21}{article}{
  author={Baudier, F. P},
  author={Gartland, C.},
  title={Umbel convexity and the geometry of trees},
  year={2021},
  eprint={arXiv:2103.16011 [math.MG]},
}

\bib{bau-kal-lan:10}{article}{
  author={Baudier, F.},
  author={Kalton, N. J.},
  author={Lancien, G.},
  title={A new metric invariant for Banach spaces},
  journal={Studia Math.},
  volume={199},
  date={2010},
  number={1},
  pages={73--94},
}

\bib{bau-lan:15}{article}{
  author={Baudier, F.},
  author={Lancien, G.},
  title={Tight embeddability of proper and stable metric spaces},
  journal={Anal. Geom. Metr. Spaces},
  volume={3},
  date={2015},
  number={1},
  pages={140--156},
}

\bib{bau-lan-mot-sch:21a}{article}{
  author={Baudier, F.},
  author={Lancien, G.},
  author={Motakis, P.},
  author={Schlumprecht, Th.},
  title={A new coarsely rigid class of Banach spaces},
  journal={J. Inst. Math. Jussieu},
  volume={20},
  date={2021},
  number={5},
  pages={1729--1747},
}

\bib{bau-lan-mot-sch:21b}{article}{
  author={Baudier, F.},
  author={Lancien, G.},
  author={Motakis, P.},
  author={Schlumprecht, Th.},
  title={The geometry of Hamming-type metrics and their embeddings into Banach spaces},
  journal={Israel J. Math.},
  volume={244},
  date={2021},
  number={2},
  pages={681--725},
}

\bib{bau-lan-sch:18}{article}{
  author={Baudier, F.},
  author={Lancien, G.},
  author={Schlumprecht, Th.},
  title={The coarse geometry of Tsirelson's space and applications},
  journal={J. Amer. Math. Soc.},
  volume={31},
  date={2018},
  number={3},
  pages={699--717},
}

\bib{ben-lin:00}{book}{
  author={Benyamini, Y.},
  author={Lindenstrauss, J.},
  title={Geometric nonlinear functional analysis. Vol. 1},
  series={American Mathematical Society Colloquium Publications},
  volume={48},
  publisher={American Mathematical Society, Providence, RI},
  date={2000},
  pages={xii+488},
  isbn={0-8218-0835-4},
}

\bib{bra-lan-pet-pro:23}{article}{
  author={Braga, B. M.},
  author={Lancien, G.},
  author={Petitjean, C.},
  author={Proch\'{a}zka, A.},
  title={On Kalton's interlaced graphs and nonlinear embeddings into dual Banach spaces},
  journal={J. Topol. Anal.},
  volume={15},
  date={2023},
  number={2},
  pages={467--494},
}

\bib{dav-fig-joh-pel:74}{article}{
  author={Davis, W. J.},
  author={Figiel, T.},
  author={Johnson, W. B.},
  author={Pe\l czy\'{n}ski, A.},
  title={Factoring weakly compact operators},
  journal={J. Functional Analysis},
  volume={17},
  date={1974},
  pages={311--327},
}

\bib{fovelle:23}{article}{
  author={Fovelle, A.},
  title={Hamming graphs and concentration properties in non-quasi-reflexive Banach spaces},
  year={2023},
  eprint={arXiv:2106.04297 [math.FA]},
}

\bib{garling:82}{article}{
  author={Garling, D. J. H.},
  title={Stable Banach spaces, random measures and Orlicz function spaces},
  conference={ title={Probability measures on groups}, address={Oberwolfach}, date={1981}, },
  book={ series={Lecture Notes in Math.}, volume={928}, publisher={Springer, Berlin-New York}, },
  isbn={3-540-11501-3},
  date={1982},
  pages={121--175},
}

\bib{gromov:93}{article}{
  author={Gromov, M.},
  title={Asymptotic invariants of infinite groups},
  conference={ title={Geometric group theory, Vol. 2}, address={Sussex}, date={1991}, },
  book={ series={London Math. Soc. Lecture Note Ser.}, volume={182}, publisher={Cambridge Univ. Press, Cambridge}, },
  isbn={0-521-44680-5},
  date={1993},
  pages={1--295},
}

\bib{hei-man:82}{article}{
  author={Heinrich, S.},
  author={Mankiewicz, P.},
  title={Applications of ultrapowers to the uniform and Lipschitz classification of Banach spaces},
  journal={Studia Math.},
  volume={73},
  date={1982},
  number={3},
  pages={225--251},
}

\bib{kalton:07}{article}{
  author={Kalton, N. J.},
  title={Coarse and uniform embeddings into reflexive spaces},
  journal={Q. J. Math.},
  volume={58},
  date={2007},
  number={3},
  pages={393--414},
}

\bib{kalton:11}{article}{
  author={Kalton, N. J.},
  title={Lipschitz and uniform embeddings into $\ell _\infty $},
  journal={Fund. Math.},
  volume={212},
  date={2011},
  number={1},
  pages={53--69},
}

\bib{kalton:12}{article}{
  author={Kalton, N. J.},
  title={The uniform structure of Banach spaces},
  journal={Math. Ann.},
  volume={354},
  date={2012},
  number={4},
  pages={1247--1288},
}

\bib{kalton:13}{article}{
  author={Kalton, N. J.},
  title={Uniform homeomorphisms of Banach spaces and asymptotic structure},
  journal={Trans. Amer. Math. Soc.},
  volume={365},
  date={2013},
  number={2},
  pages={1051--1079},
}

\bib{kal-ran:08}{article}{
  author={Kalton, N. J.},
  author={Randrianarivony, N. L.},
  title={The coarse Lipschitz geometry of $l_p\oplus l_q$},
  journal={Math. Ann.},
  volume={341},
  date={2008},
  number={1},
  pages={223--237},
}

\bib{kri-mau:81}{article}{
  author={Krivine, J.-L.},
  author={Maurey, B.},
  title={Espaces de Banach stables},
  language={French, with English summary},
  journal={Israel J. Math.},
  volume={39},
  date={1981},
  number={4},
  pages={273--295},
}

\bib{lan-pet-pro:20}{article}{
  author={Lancien, G.},
  author={Petitjean, C.},
  author={Proch\'{a}zka, A.},
  title={On the coarse geometry of James spaces},
  journal={Canad. Math. Bull.},
  volume={63},
  date={2020},
  number={1},
  pages={77--93},
}

\bib{lan-raj:18}{article}{
  author={Lancien, G.},
  author={Raja, M.},
  title={Asymptotic and coarse Lipshitz structures of quasi-reflexive Banach spaces},
  journal={Houston J. Math.},
  volume={44},
  date={2018},
  number={3},
  pages={927--940},
}

\bib{mau-mil-tom:95}{article}{
  author={Maurey, B.},
  author={Milman, V. D.},
  author={Tomczak-Jaegermann, N.},
  title={Asymptotic infinite-dimensional theory of Banach spaces},
  conference={ title={Geometric aspects of functional analysis}, address={Israel}, date={1992--1994}, },
  book={ series={Oper. Theory Adv. Appl.}, volume={77}, publisher={Birkh\"auser}, place={Basel}, },
  date={1995},
  pages={149--175},
}

\bib{netillard:18}{article}{
  author={Netillard, F.},
  title={Coarse Lipschitz embeddings of James spaces},
  journal={Bull. Belg. Math. Soc. Simon Stevin},
  volume={25},
  date={2018},
  number={1},
  pages={71--84},
}

\bib{now-yu:12}{book}{
  author={Nowak, P. W.},
  author={Yu, G.},
  title={Large scale geometry},
  series={EMS Textbooks in Mathematics},
  publisher={European Mathematical Society (EMS), Z\"{u}rich},
  date={2012},
  pages={xiv+189},
  isbn={978-3-03719-112-5},
}

\bib{raynaud:83}{article}{
  author={Raynaud, Y.},
  title={Espaces de Banach superstables, distances stables et hom\'{e}omorphismes uniformes},
  language={French, with English summary},
  journal={Israel J. Math.},
  volume={44},
  date={1983},
  number={1},
  pages={33--52},
}

\end{biblist}
\end{bibdiv}

\end{document}